\documentclass[oneside,a4paper,11pt,notitlepage]{article}
\usepackage[left=0.8in, right=0.8in, top=1.5in, bottom=1.5in]{geometry}
\usepackage{comment}
\usepackage[T1]{fontenc} % codifica dei font
\usepackage[utf8]{inputenc} % lettere accentate da tastiera
\usepackage[english]{babel}
\usepackage{amssymb}
\usepackage{lipsum} % genera testo fittizio
\usepackage{lmodern}
\usepackage{amsmath}
\usepackage{amsthm}
\usepackage{bm}
\usepackage{mathtools}
\usepackage{braket}
\usepackage{esint}
\newcommand{\abs}[1]{{\left|#1\right|}}
\newcommand{\norma}[1]{{\left\Vert#1\right\Vert}}

\usepackage{enumitem}
\usepackage{booktabs}
\usepackage{graphicx}
\usepackage{tikz}
\usetikzlibrary{patterns}
\usepackage{multicol}
\usepackage{caption}
\usepackage{bigints}
\usepackage[skins,theorems]{tcolorbox}
\tcbset{highlight math style={enhanced,
colframe=black,colback=white,arc=0pt,boxrule=1pt}}
\captionsetup{tableposition=top,figureposition=bottom,font=small}
\pagestyle{headings}
%media integrale

\def\XXint#1#2#3{{\setbox0=\hbox{$#1{#2#3}{\int}$}
    \vcenter{\hbox{$#2#3$}}\kern-.5\wd0}}

%fine media integrale
\theoremstyle{definition}
\newtheorem{definizione}{Definition}[section]
\theoremstyle{plain}
\newtheorem{teorema}{Theorem}[section]

\newtheorem{lemma}[teorema]{Lemma}
\newtheorem{prop}[teorema]{Proposition}
\newtheorem{corollario}[teorema]{Corollary}
\theoremstyle{definition}
\newtheorem{esempio}{Example}[section]
\newtheorem{oss}[esempio]{Remark}

\newtheorem*{open*}{Open problems}
\DeclareMathOperator{\R}{\mathbb{R}}

\makeatletter
\newcommand{\myfootnote}[2]{\begingroup
	\def\@makefnmark{}%
	\addtocounter{footnote}{-1}%
	\footnote{\textbf{#1} #2}
	\endgroup}
\makeatother

\usepackage{hyperref}
\hypersetup{linktoc=none, bookmarksnumbered, colorlinks=true, linkcolor=magenta, citecolor=cyan}

\title{%(4n-3)-Quantitative inequalities\\
A quantitative result for the $k$-Hessian equation}
\author{Alba Lia Masiello*, Francesco Salerno}
\date{}

\newcommand{\Addresses}{{% additional braces for segregating \footnotesize 
\bigskip 
  
   \medskip

    \textit{E-mail address}, A.L.~Masiello* (corrisponding author): \texttt{albalia.masiello@unina.it} 
  
   \medskip 
 \textsc{Dipartimento di Matematica e Applicazioni ``R. Caccioppoli'', Universit\`a degli studi di Napoli Federico II, Via Cintia, Complesso Universitario Monte S. Angelo, 80126 Napoli, Italy.}
   
     \textit{E-mail address}, F.~Salerno: \texttt{f.salerno@ssmeridionale.it} 
   \medskip

 \textsc{Mathematical and Physical Sciences for Advanced Materials and Technologies, Scuola Superiore Meridionale, Largo San Marcellino 10, 80126 Napoli, Italy.}

 \par\nopagebreak 

}} 

\begin{document}
\maketitle

\begin{abstract}

{In this paper, we study a symmetrization that preserves the mixed volume of the sublevel sets of a convex function, under which, a  P\'olya-Szeg\H o type inequality holds. We refine this symmetrization to obtain a quantitative improvement of the P\'olya-Szeg\H o inequality for the $k$-Hessian integral,   and, with similar arguments, we show a quantitative inequality for the comparison proved by Tso \cite{tso} for solutions to the $k$-Hessian equation.} 

 As an application of the first result, we prove a quantitative version of the Faber-Krahn and Saint-Venant inequalities for these equations. 
\newline
\newline
\textsc{Keywords:} $k$-Hessian equation, mixed volumes, symmetrization, P\'olya-Szeg\H o principle.  \\
\textsc{MSC 2020:}   52A39, 35B35, 35J60, 35J96
\end{abstract}

\section{Introduction}

{In recent decades, symmetrization techniques have been extensively studied in relation to the qualitative properties of solutions to second-order elliptic boundary value problems. A classic example is the Schwarz symmetrization, used in spectral analysis, among others, to provide a solution 
 to very classical problems such as the Faber-Krahn inequality or the Saint-Venant inequality. For instance, Talenti uses the Schwarz symmetrization in \cite{talenti76} to prove sharp a priori bounds for solutions to the Poisson equation with Dirichlet boundary conditions. The peculiarity of Schwarz symmetrization is that it preserves the volumes of the superlevel sets of a function, producing a radially symmetric function defined on a ball that has the same 
$L^p$ norms as the original.   
In  \cite{talenti_monge_ampere}, Talenti introduces a new type of symmetrization that preserves the perimeter of the level sets of a function. This symmetrization suitably adapts to the Monge-Ampère problem in the plane, that is

\begin{equation}\label{talma}
    \begin{cases}
            \mathrm{det}(D^2 u)=f & \text{ in } \Omega,\\
        u=0 & \text{ on } \partial\Omega,
    \end{cases}
\end{equation}
where $\Omega\subset\mathbb{R}^2$ is a bounded, open and convex set and $f$ is a positive function. The author proves that it is possible to compare the solution to  \eqref{talma} with the solution to 

\begin{equation}\label{talmasym}
    \begin{cases}
        \mathrm{det}(D^2 v)=f^0 & \text{ in } \Omega^*_1,\\
        v=0 & \text{ on } \partial\Omega^*_1,
    \end{cases}
\end{equation}
where $f^0$ is the Schwarz symmetrization of $f$ and $\Omega^*_1$ is the ball with the same perimeter as $\Omega$. 

Some years later, Tso in \cite{tso} generalizes the result by Talenti in any dimension, and the setting is the following.
%preserving the $k$-th quermassintegral.

Let $\Omega$ be an open, bounded and convex set, let $u\in C^2(\Omega)\cap C_0(\overline{\Omega})$ be a convex and negative function, and let us consider the following functional

\begin{equation}\label{dirichlet}
    H_k(u;\Omega)=\frac{1}{k+1}\int_\Omega(-u)S_k(D^2u) \, dx,
\end{equation}
where $S_k(D^2 u)$ is the $k$-Hessian operator, defined as

\begin{equation}\label{kess}
    S_k(D^2 u)= \sum_{1\le i_1< \dots < i_k\le n} \lambda_{i_1}\cdots\lambda_{i_k}, \quad k=1,\dots, n,
\end{equation}
being $\lambda_i$ the eigenvalues of the Hessian matrix of $u$. We notice that $S_k$ is a second-order differential operator and it reduces to the Monge-Ampère operator for $k=n$ and to the Laplace operator for $k=1$, when the functional $H_1$ coincides with the Dirichlet energy. 
Tso proves that, for all $k$, there exists a symmetrization that decreases the functional \eqref{dirichlet}. Moreover, the author exhibits this symmetrization that preserves the $(k-1)$-th quermassintegral of the sublevelsets of the function $u$. For a convex set $\Omega$, the quermassintegrals are geometric quantities that characterize the shape of the set itself, the precise definition can be found in  \S\ref{quermass}, while the definition of the associated symmetrization, which we will refer to as $(k-1)$-symmetrized, in \S\ref{symmetrization}. 

In the same paper, the author also generalizes the comparison by Talenti for the solution to the Monge-Amp\'ere equation to the $k$-Hessian equation in any dimension, proving that, {whenever the problem }
\begin{equation}
    \label{poisson:sk}
    \begin{cases}
        S_k(D^2 u)=f & \text{in } \Omega,\\
        u=0 & \text{ on } \partial\Omega,
    \end{cases}
\end{equation}
where $f$ is a positive function, admits a convex solution, then
the $(k-1)$-symmetrized of the solution $u$
 can be pointwise compared with the solution to 
\begin{equation}
        \label{poisson:symm}
        \begin{cases}
            S_k(D^2u^0)=f^0& \text{in } \Omega_{k-1}^*,\\
            u^0=0& \text{on } \partial\Omega_{k-1}^*,
        \end{cases}
    \end{equation}
    where $f^0$ is the Schwarz symmetrization of $f$ and $\Omega^*_{k-1}$ is the ball with the same $(k-1)$-th quermassintegral, 
obtaining

\begin{equation}
    \label{poitso}
    u^0(x)\le u^*_{k-1}(x)\le 0, \quad \forall x \in \Omega^*_{k-1}.
\end{equation}

{It is not true, in general, that the solution to \eqref{poisson:sk} is a convex function. Indeed, it was proved in \cite{Wang} that if  $f$ is a positive $C^2$ function, then the problem \eqref{poisson:sk} admits a $k$-convex solution $u\in C^2(\Omega)$, in the sense that}

$$S_i(D^2u)\ge 0, \quad \forall i=1,\cdots, k.$$

Neverthless, as already highlighted in \cite{Nunziask, salani_logcon}, the result \eqref{poitso} can be obtained by relaxing the assumption on $u$ and only requiring that it has convex sublevel sets. Some evidence in proving the convexity of the solution are obtained in \cite{Ma2008TheCO, salani_logcon}.

The result by Tso is important not only because it introduces a new symmetrization, but also because it allows one to treat some optimization problems. Indeed, as a consequence of the results by Tso, it is possible to prove a Faber-Krahn type inequality for the eigenvalues of the $k$-Hessian operator and a Sain-Venant type inequality for the $k$-th torsional rigidity in a special class of convex sets. These topics interested many authors over the years, we refer for example to \cite{ BNT_newisoperimetric, Brandolini2007ComparisonRF, DPG_stab_eig, DPG_skautovalori, Gavitone2010WeightedEP, GS_stability}. For this reason, these results have also been extended in the anisotropic setting in \cite{DPG_anisotr_per, DPGX_anisotropo}.

 \vspace{2mm}
 The present work aims to improve the results by Tso in a quantitative way, following in the footsteps of the recent paper \cite{ABMP}, in which the authors provide a quantitative version of the Talenti comparison in \cite{talenti76}. 
 The main tool in proving the results by Tso is an isoperimetric inequality for the quermassintegrals, known as Alexandrov-Fenchel inequality (see \S\ref{quermass}). 
 So, to obtain a quantitative version the key role is played by the quantitative quermassintegral isoperimetric inequality due to Groemer and Schneider \cite{Groemer_schneider91}.} In this work, the authors bound from below the isoperimetric deficit in terms of the Hausdorff asymmetry (see Section \ref{sec2}, Definition \ref{hauss}, for the precise definition) $\alpha_\mathcal{H}(\Omega)$, an index that measures the $L^\infty$ distance of $\Omega$ to a ball. 
 More precisely, our main results are
\begin{teorema}
\label{theorem1.1}
    Let $\Omega$ be an open, bounded, and convex set of $\mathbb{R}^n$ and let $u\in C^2(\Omega)$ be a convex function that vanishes on the boundary of $\Omega$. Then, there exists a positive constant $C_1=C_1(n,k,\Omega)$ such that
     \begin{equation}\label{polyask}
       \dfrac{ H_k(u;\Omega)-H_k(u_{k-1}^*;\Omega_{k-1}^*)}{\norma{u}_{L^\infty(\Omega)}^{k+1}}\geq C_1\alpha_\mathcal{H}^{\frac{n+3}{2}+k+1}(\Omega),
    \end{equation}

    where $k=1,\dots,n-1$,
    
    $$C_1(n,k,\Omega)=c_1(n,k)\zeta_{k-1}^{n-2k}(\Omega) ,$$

    $$c_1(n,k)=\binom{n-1}{k-1}\left(\frac{\omega_{n-1}}{n\omega_n}\right)^{k+1}\frac{(n+1)\beta_n}{2^\frac{n+3}{2}k(n-1)(n-k+1)}\left(\frac{2n+3}{2(n+2)}     \right)^{n-k},$$

     $\beta_n$ is given in Theorem  \ref{quantitative:af}, and $\zeta_{k-1}(\Omega)$ is the $(k-1)$-th meanradius of $\Omega$.
\end{teorema}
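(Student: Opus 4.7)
The plan is to execute Tso's proof of the non-quantitative P\'olya-Szeg\"o inequality $H_k(u;\Omega)\ge H_k(u^*_{k-1};\Omega^*_{k-1})$ level by level, then replace the Alexandrov-Fenchel inequality by its quantitative refinement (Theorem~\ref{quantitative:af}) on each sublevel set. The starting identity, used by Tso and following from the divergence structure of $S_k$ combined with the coarea formula, gives
$$(k+1)\,H_k(u;\Omega)=\int_0^{\norma{u}_{L^\infty(\Omega)}}\Phi\bigl(W_{k-1}(\Omega_t)\bigr)\,dt,\qquad \Omega_t=\{u<-t\},$$
where $\Phi$ is an explicit monotone power of the $(k-1)$-quermassintegral; the same formula with $\Omega_t$ replaced by the ball $B_t$ having the same $W_{k-1}$ represents $H_k(u^*_{k-1};\Omega^*_{k-1})$. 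Substituting Theorem~\ref{quantitative:af} in place of the classical Alexandrov-Fenchel inequality produces at each $t$ the pointwise lower bound
$$\Phi(W_{k-1}(\Omega_t))-\Phi(W_{k-1}(B_t))\ge c(n,k)\,\beta_n\,W_{k-1}(B_t)^{\gamma}\,\alpha_\mathcal{H}(\Omega_t)^{\frac{n+3}{2}},$$
with $\gamma$ an exponent coming from the derivative of $\Phi$.

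To convert this level-wise gain into a bound in terms of $\alpha_\mathcal{H}(\Omega)$, I would control the Hausdorff asymmetry of $\Omega_t$ from below by that of $\Omega$ on a sufficiently large interval of $t$. Since $u$ is convex and vanishes on $\partial\Omega$, the sublevel sets are convex and monotonically nested; the elementary one-dimensional convexity inequality $u(tp)\le -(1-t)\norma{u}_{L^\infty(\Omega)}$ along any ray from a minimum point of $u$ (which may be assumed to be the origin) to any $p\in\partial\Omega$ gives the inclusion $(1-s/\norma{u}_{L^\infty(\Omega)})\,\Omega\subseteq\Omega_s$ and hence $d_\mathcal{H}(\Omega,\Omega_s)\le\mathrm{diam}(\Omega)\,s/\norma{u}_{L^\infty(\Omega)}$. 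Consequently $\alpha_\mathcal{H}(\Omega_t)\ge\tfrac12\alpha_\mathcal{H}(\Omega)$ for all $t\in[0,\tau]$ with $\tau$ of order $\alpha_\mathcal{H}(\Omega)\,\norma{u}_{L^\infty(\Omega)}$, which accounts for the factor $2^{-(n+3)/2}$ in $c_1(n,k)$.

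Restricting the deficit integral to $[0,\tau]$ and inserting the uniform lower bound on $\alpha_\mathcal{H}(\Omega_t)^{(n+3)/2}$ reduces the proof to the explicit evaluation of $\int_0^\tau\Phi'(W_{k-1}(B_t))\,W_{k-1}(B_t)\,dt$, which is straightforward because on a ball $W_{k-1}(B_t)$ is a polynomial in $t/\norma{u}_{L^\infty(\Omega)}$. The mean-radius $\zeta_{k-1}(\Omega)$ enters here through the dimensional scaling of $W_{k-1}$, producing the factor $\zeta_{k-1}^{n-2k}(\Omega)$ in $C_1$. The same elementary computation produces the binomial $\binom{n-1}{k-1}$, the power $(\omega_{n-1}/(n\omega_n))^{k+1}$ coming from the radial $L^\infty$ normalization, and the factor $((2n+3)/(2(n+2)))^{n-k}$ arising from $n-k$ successive polynomial integrations; combined with $\tau\asymp\alpha_\mathcal{H}(\Omega)\,\norma{u}_{L^\infty(\Omega)}$ it supplies the extra $\alpha_\mathcal{H}^{k+1}$ and the $\norma{u}_{L^\infty(\Omega)}^{k+1}$ normalization demanded by the statement.

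The chief obstacle is the quantitative Hausdorff stability of the level sets in the second step: exhibiting an interval of $t$ of length $\asymp\alpha_\mathcal{H}(\Omega)\,\norma{u}_{L^\infty(\Omega)}$ on which $\alpha_\mathcal{H}(\Omega_t)$ does not deteriorate, with constants depending only on $n$, $k$ and $\zeta_{k-1}(\Omega)$. Once this stability is in place the application of Theorem~\ref{quantitative:af} is essentially a black box, and what remains is the delicate power counting that reassembles the surviving geometric constants precisely into $c_1(n,k)\,\zeta_{k-1}(\Omega)^{n-2k}$.
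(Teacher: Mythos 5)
Your high-level strategy is the same as the paper's: decompose the functional along level sets via coarea, use the cone inclusion $(1-s/\|u\|_\infty)\Omega\subseteq\Omega_s$ to control $d_\mathcal{H}(\Omega,\Omega_s)$, invoke a propagation lemma for the Hausdorff asymmetry so that $\alpha_\mathcal{H}(\Omega_t)\geq\tfrac12\alpha_\mathcal{H}(\Omega)$ on an interval of length $\asymp\alpha_\mathcal{H}(\Omega)\|u\|_\infty$, and restrict the deficit integral to that interval. This matches the paper's Lemma \ref{lembrasco}, the cone comparison of Remark \ref{comparison:cones}, and the threshold \eqref{threshold}.

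However, the mechanism by which the quantitative Alexandrov--Fenchel inequality is injected is not what you describe, and as written your step does not make sense. You posit an identity
$(k+1)H_k(u;\Omega)=\int_0^{\|u\|_\infty}\Phi(W_{k-1}(\Omega_t))\,dt$
with $\Phi$ a function of $W_{k-1}$ alone, define $B_t$ as the ball with $W_{k-1}(B_t)=W_{k-1}(\Omega_t)$, and then claim a pointwise lower bound on $\Phi(W_{k-1}(\Omega_t))-\Phi(W_{k-1}(B_t))$. But by construction $W_{k-1}(\Omega_t)=W_{k-1}(B_t)$, so that difference is identically zero. In the actual argument, the divergence structure and coarea give $(k+1)H_k(u;\Omega)=\tfrac1k\int_m^0\int_{\partial\Omega(\mu)}\frac{S_k^{ij}u_iu_j}{|\nabla u|}$, and the level-set representation is obtained only after a \emph{Jensen inequality} on each $\partial\Omega(\mu)$, which produces a \emph{lower bound}, not an identity, whose integrand is $[nW_k(\Omega(\mu))]^{k+1}\big/[\frac{d}{d\mu}nW_{k-1}(\Omega(\mu))]^k$ — it depends on $W_k$ and on the $\mu$-derivative of $W_{k-1}$, not on $W_{k-1}$ alone. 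The Alexandrov--Fenchel inequality then acts by comparing $W_k(\Omega(\mu))$ with the ball value having matching $W_{k-1}$; for the symmetrized function the level sets are balls so Jensen and Alexandrov--Fenchel are both equalities, which is why the same expression gives $H_k(u^*_{k-1};\Omega^*_{k-1})$ exactly. Without the Jensen step and the correct identification of which quermassintegral is being lowered, there is no deficit to extract.

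A second, smaller gap: the final integral is not an "explicit evaluation of a polynomial". Since $W_{k-1}(\Omega_t)$ is not a known function of $t$, the paper changes variable to $r=\zeta_{k-1}$ and applies Jensen once more to bound $\int_{\overline\mu}^0 (d\zeta_{k-1}/d\mu)^{-k}\,d\mu$ from below by $(-\overline\mu)^{k+1}/\zeta_{k-1}^k(\Omega)$; the factor $(\tfrac{2n+3}{2(n+2)})^{n-k}$ then comes from a single lower bound $(1-\overline\mu/m)^{n-k}\geq(\tfrac{2n+3}{2(n+2)})^{n-k}$, not from $n-k$ successive integrations, and the $(\omega_{n-1}/(n\omega_n))^{k+1}$ factor comes from $\zeta_{n-1}(\Omega)/D(\Omega)\geq\omega_{n-1}/(n\omega_n)$ applied $k+1$ times through $\overline\mu$, not from a radial normalization.
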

Theorem \ref{theorem1.1} allows us to improve the Faber-Krahn and Saint-Venant-type inequalities in a quantitative form as stated in Corollaries \ref{quant:eigen}-\ref{quant:tors}. 

Later, we focus on the Poisson equation \eqref{poisson:sk} and we prove a quantitative version of inequality \eqref{poitso} with the same techniques of Theorem \ref{theorem1.1}. 

\begin{teorema}\label{teo1.2}
    Let $\Omega$ be an open, bounded and convex set of $\mathbb{R}^n$ and let $f$ be a positive and measurable function. Let $u\in C^2(\Omega)$ be a convex solution to (\ref{poisson:sk}), $u_{k-1}^*$ be its $(k-1)$-symmetrized, $k=1,\dots,n-1$, and let $u^0$ be the solution to (\ref{poisson:symm}). Then there exists a positive constant $C_2=C_2(n,k)$ such that
    \begin{equation*}
        \frac{\|u_{k-1}^*-u^0\|_{L^\infty(\Omega_{k-1}^*)}}{\|u\|_{L^\infty(\Omega)}}\geq C_2\alpha_\mathcal{H}^\frac{n+5}{2}(\Omega),
    \end{equation*}
    where
    \begin{equation*}
        C_2(n,k)=\frac{(n+1)(k+1)\beta_n\omega_{n-1}}{2^\frac{n+9}{2}n^2(n+2)(n-1)(n-k+1)\omega_n^2}.
    \end{equation*}
\end{teorema}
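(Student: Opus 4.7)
The plan is to mirror the strategy used for Theorem \ref{theorem1.1}, but now following Tso's \emph{pointwise} comparison argument rather than his energy comparison. The idea is to quantify the strict super-solution property of the symmetrized $u^{*}_{k-1}$ that arises by replacing the Alexandrov--Fenchel inequality with its Groemer--Schneider quantitative counterpart, and then to translate this into a pointwise gap with $u^0$.

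First, I would introduce the $(k-1)$-quermassintegral distribution function $\mu(t) = W_{k-1}(\{u < t\})$ and its counterpart $\mu^0(t)$ for the radial solution $u^0$. Tso's construction produces a first order differential inequality for the radial profile of $u^{*}_{k-1}$, in which the key inequality step is the isoperimetric comparison between $\{u < t\}$ and a ball with the same $(k-1)$-quermassintegral. Replacing Alexandrov--Fenchel by Groemer--Schneider turns this into a strict inequality with an extra deficit term proportional to $\alpha_\mathcal{H}^{(n+3)/2}(\{u < t\})$, while $u^0$ satisfies the corresponding ODE with equality.

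Second, subtracting the two ODEs and integrating the resulting strict differential inequality between a level $t_0$ (to be optimized) and $0$ gives a quantitative pointwise gap $u^{*}_{k-1}(x) - u^0(x) \ge \delta$ for some $x$ with $u^{*}_{k-1}(x) = t_0$. The integration is precisely what produces one additional power of the asymmetry beyond the Groemer--Schneider exponent, matching the final exponent $(n+5)/2$ in the statement. Taking the supremum in $x$ and normalizing by $\norma{u}_{L^\infty(\Omega)}$, which also controls the admissible range of $t_0$, then yields the desired inequality with the explicit constant $C_2(n,k)$.

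The main obstacle I anticipate is controlling the Hausdorff asymmetry $\alpha_\mathcal{H}(\{u < t\})$ of the sublevel sets in terms of $\alpha_\mathcal{H}(\Omega)$, since these sets degenerate to a point as $t$ approaches $\min u$. This should be handled by restricting to a range of $t$ close enough to $0$ on which the sublevel sets retain a Hausdorff distance from a ball comparable to $\alpha_\mathcal{H}(\Omega)$, a fact one can extract from the convexity of $u$ at the cost of a dimensional constant. A secondary difficulty is carefully tracking the $k$-dependent factors coming from the $k$-Hessian integral identities, so that the final constant $C_2(n,k)$ coincides with the explicit expression in the statement.
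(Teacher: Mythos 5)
Your proposal follows essentially the same route as the paper: quantify Tso's pointwise comparison by inserting the Groemer--Schneider deficit into the level-set differential inequality, control $\alpha_\mathcal{H}(\Omega(\mu))$ on a range of levels near $0$ via the cone comparison and a propagation lemma (Lemma \ref{lembrasco}), and gain the extra power $\alpha_\mathcal{H}(\Omega)$ from the width $-\overline{\mu}$ of that level window being itself of order $\alpha_\mathcal{H}(\Omega)\norma{u}_{L^\infty}$. The paper realizes your ``subtract the two ODEs'' step by comparing the integrated inequality with the explicit radial integral formula \eqref{integral:representation} for $u^0$ and passing to the supremum defining $u^*_{k-1}$, but this is a presentational rather than a substantive difference.
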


 Theorem \ref{teo1.2} recalls the main result in \cite{ABMP}, and also its extension to the Hamilton-Jacobi equation contained in \cite{amato2024quantitativecomparisonresultsfirstorder}; here the advantage is that we only consider convex functions defined on convex sets.

The pointwise comparison \eqref{poitso} between the symmetrized of the solution to \eqref{poisson:sk} and the solution to \eqref{poisson:symm}
can be used to prove a comparison between the $k$-Hessian functional of $u$ and $u^0$. We prove the following
\begin{teorema}
\label{gradienti}
     Let $\Omega$ be an open, bounded and convex set of $\mathbb{R}^n$ and let $f$ be a positive and measurable function. Let $u\in C^2(\Omega)$ be a solution to \eqref{poisson:sk} with convex sublevel sets and let $u^0$ be the solution to \eqref{poisson:symm}, then it holds

    \begin{equation}
        \label{dis:hk}
        H_k(u;\Omega)\le H_k(u^0; \Omega^*_{k-1}),    \end{equation}

    for all $k=1,\dots,n$.
\end{teorema}

In Section \ref{sec3}, we make use of Theorem \ref{teo1.2} to improve inequality \eqref{dis:hk} in a quantitative way.

\begin{teorema}
    \label{teo1.3}
    Let $\Omega$ be an open, bounded and convex set of $\mathbb{R}^n$ and let $f$ be a positive and measurable function. Let $u$ be a convex solution to \eqref{poisson:sk} and let $u^0$ be the solution to \eqref{poisson:symm}. Then there exists a positive constant $C_3=C_3(n,k)$ such that

    \begin{equation*}       
        \frac{H_k(u^0; \Omega^*_{k-1})-H_k(u;\Omega)}{\norma{u}_{L^\infty(\Omega) }\norma{f}_{L^1(\Omega)}}\ge C_3\alpha_\mathcal{H}^\frac{n+5}{2}(\Omega),
    \end{equation*} 
    where
    \begin{equation*}
        C_3(n,k)=\left(\frac{2n+3}{2(n+2)}\right)^n \frac{\omega_{n-1}\beta_n}{n^2\omega_n^2} \frac{(n+1)(k+1)}{k(n-1)(n-k+1)}  2^{-\frac{n+7}{2}}
    \end{equation*}
    
    where $\beta_n$ is given in Theorem \ref{quantitative:af} and $k=1,\dots,n-1$.
\end{teorema}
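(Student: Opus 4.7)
The plan is to reduce the theorem to the pointwise gap in Theorem \ref{teo1.2}. Integration by parts for the $k$-Hessian operator gives
\begin{equation*}
H_k(u;\Omega)=\frac{1}{k+1}\int_\Omega(-u)f\,dx, \qquad H_k(u^0;\Omega_{k-1}^*)=\frac{1}{k+1}\int_{\Omega_{k-1}^*}(-u^0)f^0\,dx.
\end{equation*}
Chaining the two inequalities that underlie the proof of Theorem \ref{gradienti}, namely $\int_\Omega(-u)f\,dx\leq\int_{\Omega_{k-1}^*}(-u_{k-1}^*)f^0\,dx$ (from the Hardy--Littlewood inequality together with the monotonicity of sublevel sets under Tso's symmetrization) and $\int_{\Omega_{k-1}^*}(-u_{k-1}^*)f^0\,dx\leq\int_{\Omega_{k-1}^*}(-u^0)f^0\,dx$ (from the Tso pointwise comparison \eqref{poitso}), one obtains
\begin{equation*}
H_k(u^0;\Omega_{k-1}^*)-H_k(u;\Omega) \geq \frac{1}{k+1}\int_{\Omega_{k-1}^*}(u_{k-1}^*-u^0)\,f^0\,dx.
\end{equation*}

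The next step is to lower bound the integral on the right by $c(n,k)\,\|u_{k-1}^*-u^0\|_{L^\infty(\Omega_{k-1}^*)}\|f\|_{L^1(\Omega)}$. Since $u^0$ and $u_{k-1}^*$ are both radial, convex, and vanish on $\partial\Omega_{k-1}^*$, the nonnegative radial function $g(r):=u_{k-1}^*(r)-u^0(r)$ attains its maximum at $r=0$. I would exploit the radial monotonicity of $f^0$ through a dyadic-type splitting of $\Omega_{k-1}^*$: on the inner ball $B_{R/2}$ (where $R$ is the radius of $\Omega_{k-1}^*$), the estimate $g\geq\tfrac{1}{2}g(0)$ can be derived from comparing the radial ODEs satisfied by the profiles of $u^0$ and $u_{k-1}^*$ (with $f^0$ on one side and the weak radial $k$-Hessian inequality on the other), while $\int_{B_{R/2}} f^0\,dx\geq 2^{-n}\|f\|_{L^1(\Omega)}$ follows from the radial decrease of $f^0$. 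Combining these yields the desired weighted bound.

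Plugging Theorem \ref{teo1.2} into this lower bound converts $\|u_{k-1}^*-u^0\|_{L^\infty(\Omega_{k-1}^*)}$ into $C_2\,\|u\|_{L^\infty(\Omega)}\,\alpha_{\mathcal H}^{(n+5)/2}(\Omega)$, and tracking the constants through the chain gives the explicit $C_3(n,k)$ stated. The main obstacle is the intermediate step of bounding $g(r)$ from below by a fraction of its maximum on a set of controlled size: the difference of two convex radial profiles vanishing at the boundary is not itself concave in general, so one cannot apply a naive cone estimate. The argument must use the radial ODE satisfied by $u^0$ together with the corresponding (weak) radial inequality for $u_{k-1}^*$ inherited from the Tso symmetrization, exploiting the radial decrease of $f^0$ to avoid losing the factor $\|f\|_{L^1(\Omega)}$.
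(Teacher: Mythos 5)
Your first reduction — chaining the Hardy--Littlewood step and the Tso comparison from the proof of Theorem \ref{gradienti} to get
$H_k(u^0;\Omega_{k-1}^*)-H_k(u;\Omega) \geq \frac{1}{k+1}\int_{\Omega_{k-1}^*}(u_{k-1}^*-u^0)f^0\,dx$ — matches the paper exactly, and so does the general idea of exploiting the radial structure and the radial decay of $f^0$. The divergence happens in how the right-hand side is bounded below, and that is where your plan has a genuine gap.

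You propose to invoke Theorem \ref{teo1.2} (the $L^\infty$ conclusion) and then establish the interpolation inequality $\int_{\Omega_{k-1}^*}(u_{k-1}^*-u^0)f^0\,dx \geq c\,\|u_{k-1}^*-u^0\|_{L^\infty}\|f\|_{L^1}$. The two ingredients you sketch are (i) $g:=u_{k-1}^*-u^0$ takes its maximum at $r=0$ and satisfies $g\geq\frac12 g(0)$ on $B_{R/2}$, and (ii) $\int_{B_{R/2}}f^0 \geq 2^{-n}\|f\|_{L^1}$. Part (i) is the problem. That $g$ attains its maximum at $r=0$ can indeed be seen from the inequality derived just before \eqref{intermedii}, which yields $(u^0)'(r)\geq (u_{k-1}^*)'(r)$, so $g'\leq 0$; but monotone decay alone does not give a quantitative Harnack-type bound such as $g\geq\frac12 g(0)$ on $B_{R/2}$. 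The difference of two convex radial profiles has no usable concavity, and the ODE comparison you allude to would essentially reconstruct the pointwise inequality that the paper already has in hand. As written, this step is unsupported and could fail.

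What the paper does instead is skip Theorem \ref{teo1.2}'s $L^\infty$ form entirely and reuse its intermediate \emph{pointwise} estimate \eqref{quant:parz}:
$u_{k-1}^*(x)-u^0(x)\geq \kappa_5(n,k)\,\alpha_\mathcal{H}^{(n+3)/2}(\Omega)\,\bigl(-\max\{\overline{\mu},u_{k-1}^*(x)\}\bigr)$.
On the sublevel set $\{u_{k-1}^*<\overline{\mu}\}$ this is a constant lower bound $\kappa_5\,\alpha_\mathcal{H}^{(n+3)/2}(\Omega)\,(-\overline{\mu})$, and $|\{u_{k-1}^*<\overline{\mu}\}|\geq\omega_n\bigl(\tfrac{2n+3}{2(n+2)}\bigr)^n\zeta_{k-1}^n(\Omega)$ via the cone estimate \eqref{control} and \eqref{bound 1-mu/m}. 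The monotonicity of $\varphi(s)=\tfrac1s\int_0^s f^0$ then converts $\int_{\{u_{k-1}^*<\overline{\mu}\}}f^0$ into a multiple of $\|f\|_{L^1(\Omega)}$, and the threshold \eqref{threshold} supplies both $\|u\|_{L^\infty(\Omega)}$ and the extra power of $\alpha_\mathcal{H}$. So the set on which the gap is controlled is not $B_{R/2}$ but $\{u_{k-1}^*<\overline{\mu}\}$, the constant lower bound on $g$ there is not a fraction of $g(0)$ but the explicit $\kappa_5\alpha_\mathcal{H}^{(n+3)/2}(\Omega)(-\overline{\mu})$, and no separate argument about the shape of $g$ is needed. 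If you want to keep your plan, the fix is to replace the appeal to Theorem \ref{teo1.2} with the estimate \eqref{quant:parz} from its proof — at which point you would be doing exactly what the paper does.
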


    We note that the case $k=n$, namely the Monge-Ampère equation, is not covered in this work, except for Theorem \ref{gradienti}. In particular, in the first result, we cannot apply the Aleksandrov-Fenchel inequality for such index. Additionally, we cannot prove Theorem \ref{teo1.2} using the same technique as in the cases $k=1,\dots,n-1$, even if some partial results can be obtained. So, it is clear that Theorem \ref{teo1.3} cannot be extended as a consequence of Theorem \ref{teo1.2}.

The paper is organized as follows: in Section \ref{sec2} we recall some preliminary tools about convex geometry and quermassintegral symmetrization; in Section \ref{sec3} we prove the main Theorems.

\section{Notation and preliminaries}\label{sec2}
We provide the classical definitions and results that we need in the following. The reader can refer to \cite[Chapter 1]{Schneider_2013} for more details.
\begin{definizione}
    \label{minksum}
    Let $\Omega, K\subset \R^2$ two bounded convex sets. We define the \emph{Minkowski sum} $(+)$ as
    \begin{equation*}
        \label{sum}
        \Omega+K:=\{x+y \, : \, x\in \Omega, \, y\in K\},
    \end{equation*}
\end{definizione} 

\begin{definizione}
    Let $E,F$ be two convex sets in $\mathbb{R}^n$. The \emph{Hausdorff distance} between $E$ and $F$ is defined as
\[
    d_{\mathcal{H}}(E,F) := \inf\left\{ \varepsilon>0 \, : \,  E \subset F + \varepsilon B , \, F \subset E + \varepsilon B  \right\},
\]
where $B$ is the unitary ball centered at the origin and $F + \varepsilon B$ is the Minkowski sum.
\end{definizione}

\begin{definizione}
    \label{support}
Let $\Omega$ be a convex set. The \emph{support} function of $\Omega$ is defined as
\begin{equation*}
        h(\Omega, u):=\max_{x\in \Omega} (x\cdot u), \qquad u\in \mathbb{S}^{n-1}.
    \end{equation*}
\end{definizione}
The support function allows us to define, for a convex set $\Omega$, two geometrical quantities.
\begin{definizione}
 The \emph{width function} $w(\Omega, \cdot)$ of $\Omega$
is defined as
$$w(\Omega, u):= h(\Omega, u) + h(\Omega, -u) \quad \text{for } u\in \mathbb{S}^{n-1}.$$

The quantity $w(\Omega, u)$ is the thickness of $\Omega$ in the direction $u$ and it represents the distance between
the two support hyperplanes of $\Omega$ orthogonal to $u$. The maximum of the width function,
$$D(\Omega):=\max_{u\in \mathbb{S}^{n-1}} w(\Omega, u)$$
is the diameter of $\Omega$.

The mean value of the width function is called the \emph{mean width} and it is denoted by 
$$w(\Omega):= \frac{2}{n\omega_n} \int_{\mathbb{S}^{n-1}}h(\Omega, u)\, du.$$

The \emph{Steiner point} $s(\Omega)$ is defined via the vector-valued integral

$$s(\Omega):=\frac{1}{\omega_n}\int_{\mathbb{S}^{n-1}}h(\Omega, u)u\, du$$
\end{definizione}

\begin{definizione}
    \label{steinerball}
    The \emph{Steiner ball} $B_\Omega$ of a convex set $\Omega$ is the ball centered at the Steiner point of $\Omega$ with diameter equal to $w(\Omega)$.
\end{definizione}

\begin{oss}
    Thanks to the Definition \ref{support} of the support function of a convex set, it is possible to characterize the Hausdorff distance as follows

    $$d_\mathcal{H}(E, F)= \norma{h(E,\cdot)-h(F,\cdot)}_{L^\infty}.$$
    Now it is clear that
    \begin{equation*}
        \abs{s(E)-s(F)}\le n d_\mathcal{H}(E,F).
    \end{equation*}
\end{oss} 
\begin{oss}
    \label{width/diam}
    Let us observe that it is possible to compare the mean width and the diameter of $\Omega$, for any convex set $\Omega\subset\mathbb{R}^n$, as the following bounds hold

    $$\frac{2\omega_{n-1}}{n\omega_n}\le\frac{w(\Omega)}{D(\Omega)}\le 1.$$

    The upper bound follows by the definition of diameter and mean width, while the lower bound is proved in \cite{alexandrov}, and we recall here the proof for the reader's convenience.
     
     Let $S\subset\Omega$ be the diameter segment and let us suppose, without loss of generality, that it can be  written as
    \begin{equation*}
        S=\frac{D(\Omega)}{2}\alpha \textbf{e}_1,\quad \alpha\in[-1,1]
    \end{equation*}
    Then, by the definition of support function, we have that
    \begin{equation*}
        h(\Omega,u)\geq h(S,u)=\frac{D(\Omega)}{2}\sup_{\alpha\in[-1,1]}\langle \alpha\textbf{e}_1, u \rangle=\frac{D(\Omega)}{2}|\langle \textbf{e}_1,u\rangle|
    \end{equation*}
    and then, if we write $\overline{u}=(u_2,\dots,u_n)$
    \begin{equation*}
        \begin{split}
            \frac{w(\Omega)}{D(\Omega)}&=\frac{1}{D(\Omega)}\frac{2}{n\omega_n}\int_{\mathbb{S}^{n-1}}h(\Omega,u)\,du\geq \frac{1}{n\omega_n}\int_{\mathbb{S}^{n-1}}|\langle \textbf{e}_1,u\rangle|\,du=\frac{1}{n\omega_n}\int_{\{u_1^2+\dots+u_n^2=1\}}|u_1|\,d\mathcal{H}^{n-1}(u)\\
            &=\frac{2}{n\omega_n}\int_{B^{n-1}} \sqrt{1-\abs{\overline{u}}^2}\frac{1}{\sqrt{1-\abs{\overline{u}}^2}}\, d\overline{u}= \frac{2\omega_{n-1}}{n\omega_n}.
        \end{split}
    \end{equation*}
\end{oss}
\subsection{Quermassintegrals}  \label{quermass}
 For the content of this section, we will refer to \cite{Schneider_2013}. Let $\Omega \subset \R^n$ be a non-empty, bounded, convex set, let $B$ be the unitary ball centered at the origin and $\rho > 0$. We can write the Steiner formula for the Minkowski sum $\Omega+ \rho B$ as
\begin{equation}
    \label{Steiner_formula}
    \abs{\Omega + \rho B} = \sum_{i=0}^n \binom{n}{i} W_i(\Omega) \rho^{i} .
\end{equation}
The coefficients $W_i(\Omega)$ are known in the literature as quermassintegrals of $\Omega$. In particular, $W_0(\Omega) = \abs{\Omega}$,  $nW_1(\Omega) = P(\Omega)$ and $W_n(\Omega) = \omega_n$ where $\omega_n$ is the measure of $B$.

Formula \eqref{Steiner_formula} can be generalized to every quermassintegral, obtaining
\begin{equation} \label{Steinerquermass}
    W_j(\Omega+\rho B) = \sum_{i=0}^{n-j} \binom{n-j}{i} W_{j+i}(\Omega) \rho^i, \qquad j=0, \ldots, n-1.
\end{equation}

If $\Omega$ has $C^2$ boundary, the quermassintegrals can be written in terms of principal curvatures of $\Omega$. More precisely, denoting with $\sigma_k$ the $k$-th normalized elementary symmetric function of the principal curvature $\kappa_1, \ldots, \kappa_{n-1}$ of $\partial \Omega$, i.e.
 \[
 \sigma_0 = 1, \qquad \qquad \sigma_j = \binom{n-1}{j}^{-1} \sum_{1 \leq i_1 < \ldots < i_j \leq n-1} \kappa_{i_1} \ldots \kappa_{i_j}, \qquad j = 1,\ldots,n-1,
 \]
 then, the quermassintegrals can be written as
 \begin{equation}
     \label{quermass_con_curvature}
     W_j(\Omega) = \frac{1}{n} \int_{\partial \Omega} \sigma_{j-1} \, d \mathcal{H}^{n-1}, \qquad j = 1,\ldots, n-1.
 \end{equation}

Furthermore, Aleksandrov-Fenchel inequalities hold true
\begin{equation}
    \label{Aleksandrov_Fenchel_inequalities}
    \biggl( \frac{W_j(\Omega)}{\omega_n} \biggr)^{\frac{1}{n-j}} \geq \biggl( \frac{W_i(\Omega)}{\omega_n} \biggr)^{\frac{1}{n-i}}, \qquad 0 \leq i < j \leq n-1,
\end{equation}
where equality holds if and only if $\Omega$ is a ball. When $i=0$ and $j=1$, formula \eqref{Aleksandrov_Fenchel_inequalities} reduces to the classical isoperimetric inequality, i.e.
\[
    P(\Omega) \geq n \omega_n^{\frac{1}{n}} \abs{\Omega}^{\frac{n-1}{n}}.
\]
As the classical isoperimetric inequality, the Alexandrov-Fenchel inequalities can be improved in a quantitative form, as proved in \cite{Groemer_schneider91}.

\begin{teorema}[Groemer-Schneider]
    \label{quantitative:af}
    Let $\Omega$ be a bounded convex set with Steiner ball $B_\Omega$, then, for $0\le i<j\le n-1$, it holds
    \begin{equation}
        \label{quantitative:quermass}
       \begin{aligned}
           &\dfrac{\omega_n^{i-j} W_j(\Omega)^{n-i}-W_i(\Omega)^{n-j}}{W_i(\Omega)^{n-j}}\ge \\ &\frac{n+1}{n(n-1)}s_{j-i}\left(\frac{W_{n-1}^2(\Omega)}{\omega_n}, W_{n-2}(\Omega)\right)\frac{\beta_n}{W_{n-2}^{j-i}(\Omega)} \left(\dfrac{\omega_n }{W_{n-1}(\Omega)}\right)^{\frac{n-1}{2}}d_{\mathcal{H}}(\Omega,B_\Omega)^{\frac{n+3}{2}},
       \end{aligned}       
    \end{equation}
    where 
    $$s_{m}(x,y)=\sum_{\nu=0}^{m-1}x^\nu y^{m-\nu-1}, \quad \beta_n=\alpha_n\left(\frac{n\omega_n}{\omega_{n-1}}\right),$$

    and
    $$\alpha_n(c)=\frac{\omega_{n-1}}{(n+1)(n+3)}\min \left\{\frac{3}{\pi^2 n(n+2)2^n}, \frac{16(c+2)^\frac{n-3}{2}}{(c-1)^{n-2}}\right\}.$$
\end{teorema}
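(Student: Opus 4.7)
The plan follows the strategy of Groemer--Schneider in \cite{Groemer_schneider91}: reduce the statement for general indices $(i,j)$ to the base stability estimate at $(i,j) = (n-2, n-1)$ by an algebraic iteration along the Alexandrov--Fenchel chain \eqref{Aleksandrov_Fenchel_inequalities}, and then prove the base case by a spherical-harmonic analysis of the support function of $\Omega$ about its Steiner ball.

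\textbf{Reduction to the base case.} Set $a := W_{n-1}(\Omega)^2/\omega_n$ and $b := W_{n-2}(\Omega)$; Alexandrov--Fenchel gives $a \geq b$, with equality only for balls. I would iterate \eqref{Aleksandrov_Fenchel_inequalities} $j-i$ times along the consecutive indices $i < i+1 < \cdots < j$, combined at each step with the elementary factorization $a^m - b^m = (a-b)\, s_m(a,b)$, to obtain a bound of the form
$$\omega_n^{i-j} W_j(\Omega)^{n-i} - W_i(\Omega)^{n-j} \geq N(\Omega)\cdot s_{j-i}(a,b)\cdot (a-b),$$
where the normalization $N(\Omega)$ absorbs the factors $W_i^{n-j}$, $W_{n-2}^{j-i}$ and $(\omega_n/W_{n-1})^{(n-1)/2}$ appearing in \eqref{quantitative:quermass}. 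This isolates the single geometric quantity that still has to be controlled by the Hausdorff asymmetry, namely the base deficit $a - b = W_{n-1}(\Omega)^2/\omega_n - W_{n-2}(\Omega)$.

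\textbf{Stability of the base case.} Translate so that the Steiner point of $\Omega$ lies at the origin; then the Steiner ball $B$ is centered at the origin with radius $r = w(\Omega)/2$, and the support function decomposes as $h(\Omega, u) = r + \phi(u)$ on $\mathbb{S}^{n-1}$, where $\|\phi\|_{\infty} = d_\mathcal{H}(\Omega,B)$ and the Steiner-point condition gives $\int_{\mathbb{S}^{n-1}} \phi(u)\, u\, du = 0$. After a Minkowski-smoothing $\Omega \leadsto \Omega + \varepsilon B$ that legitimizes the curvature representation \eqref{quermass_con_curvature}, the deficit $a - b$ expands as a positive semi-definite quadratic form in $\phi$ which, in the spherical-harmonic basis, dominates an $H^{1/2}$-type norm $\|\phi\|_*$. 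A quantitative Sobolev-type embedding on $\mathbb{S}^{n-1}$, of the form $\|\phi\|_{\infty}^{(n+3)/2} \leq C\,\|\phi\|_*^{2}$ for centered $\phi$, then delivers
$$\frac{W_{n-1}(\Omega)^2}{\omega_n} - W_{n-2}(\Omega) \geq \beta_n \left(\frac{\omega_n}{W_{n-1}(\Omega)}\right)^{\!(n-1)/2}\! d_\mathcal{H}(\Omega, B)^{(n+3)/2},$$
and letting $\varepsilon \to 0$ recovers the estimate for the original $\Omega$.

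Combining the two steps and tracking the constants yields \eqref{quantitative:quermass}. The main obstacle is the stability of the base case: the exponent $(n+3)/2$ is not algebraic but comes from an interpolation balancing the $L^{\infty}$ asymmetry $\|\phi\|_{\infty}$ against the weaker Sobolev norm that the quermassintegral deficit actually controls. The two-piece expression inside the minimum defining $\alpha_n(c)$, and hence $\beta_n$, reflects the two frequency regimes of $\phi$ (low versus high spherical harmonics) that must be handled separately in this interpolation; the low-frequency part is dictated by the normalization $c = n\omega_n/\omega_{n-1}$, while the high-frequency part produces the purely dimensional factor $3/(\pi^2 n(n+2) 2^n)$.
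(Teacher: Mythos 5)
This theorem is stated in the paper as a cited result from Groemer and Schneider \cite{Groemer_schneider91}; the paper provides no proof of it, so there is no internal argument against which your sketch can be compared. As a reconstruction of what Groemer and Schneider plausibly do, your outline is reasonable in broad strokes: the reduction to the adjacent pair $(n-2,n-1)$ via chaining Alexandrov--Fenchel with the factorization $a^m - b^m = (a-b)\,s_m(a,b)$ is consistent with the appearance of $s_{j-i}(W_{n-1}^2/\omega_n, W_{n-2})$ in the final constant, and the spherical-harmonic decomposition of $h(\Omega,\cdot)$ about the Steiner ball (with $\phi$ automatically orthogonal to the degree-$0$ and degree-$1$ harmonics by the mean-width and Steiner-point normalizations) is the standard tool for Hausdorff stability estimates.

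That said, what you have written is a plan, not a proof, and both of its steps are left open precisely where they are hard. In the reduction, you do not show how the deficit $\omega_n^{i-j}W_j^{n-i} - W_i^{n-j}$ actually decomposes into a factor $s_{j-i}(a,b)$ times the base deficit $a-b$; the exponents $n-i$, $n-j$ on the left do not match the pure-power structure $a^m - b^m$, so the chaining must be carried out explicitly to determine the normalization $N(\Omega)$ you invoke. In the base case, the inequality $\|\phi\|_\infty^{(n+3)/2} \lesssim \|\phi\|_*^2$ (with $\|\cdot\|_*$ the weighted spherical-harmonic norm controlled by the deficit) is the entire content of the stability result, and you give no derivation of it; describing the two-piece minimum in $\alpha_n(c)$ as "reflecting two frequency regimes" is an interpretation of the answer, not an argument that produces it. Had the paper actually needed to prove this theorem rather than cite it, essentially all of the work would lie in exactly these two gaps.
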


\begin{oss}
    Let us observe that by the Alexandrov-Fenchel inequalities \eqref{Aleksandrov_Fenchel_inequalities} and the fact that $j-i\ge 1$, the quantitative result \eqref{quantitative:quermass} can be rewritten as follows  
    \begin{equation}
        \label{alexfenchshort}
        \dfrac{\omega_n^{i-j} W_j(\Omega)^{n-i}-W_i(\Omega)^{n-j}}{W_i(\Omega)^{n-j}}\ge \frac{(n+1)\beta_n}{n(n-1)\omega_n}\left(\frac{\omega_n d_{\mathcal{H}}(\Omega,B_\Omega)}{W_{n-1}(\Omega)} \right)^\frac{n+3}{2}.
    \end{equation}
\end{oss}

\begin{definizione}
    Let $\Omega$ be a compact convex set, we define the \textit{k-th mean radius}, $k=1,\dots,n-1$, of $\Omega$ as
\begin{equation}
    \label{meanradii}
    \zeta_k(\Omega)=\left(\frac{W_k(\Omega)}{\omega_n}\right)^\frac{1}{n-k}.
\end{equation}
\end{definizione}
The quantitative Alexandrov-Fenchel inequalities can be rewritten in terms of mean radii, obtaining
\begin{equation}
    \label{quantitative_mean_radii}
    \frac{\zeta_{j}^{(n-j)(n-i)}-\zeta_{i}^{(n-j)(n-i)}}{\zeta_{i}^{(n-j)(n-i)}} \ge \frac{(n+1)\beta_n}{n(n-1)\omega_n}\left(\frac{ d_{\mathcal{H}}(\Omega,B_\Omega)}{\zeta_{n-1}(\Omega)} \right)^\frac{n+3}{2}.
\end{equation}
\begin{oss}
    We observe that, since $w(\Omega)=2\zeta_{n-1}(\Omega)$, Remark \ref{width/diam} implies 
    
    \begin{equation}
        \label{zeta/diam}
        \frac{\zeta_{n-1}(\Omega)}{D(\Omega)}\geq \frac{\omega_{n-1}}{n\omega_n}.
    \end{equation}
\end{oss}
\begin{oss}
    If $K$ is a $j$-dimensional convex set in $\R^n$, then the $i$-th quermassintegral, $i=1,\dots,n-j-1$, equals zero, while the others may be non-null.
\end{oss}

\begin{definizione}\label{hauss}
The \textit{Hausdorff asymmetry index} is defined as 
\begin{equation*}
    \alpha_\mathcal{H}(\Omega)=\dfrac{d_\mathcal{H}(\Omega,B_\Omega)}{\zeta_{n-1}(\Omega)},
\end{equation*}
 where $B_\Omega$ is the Steiner ball of $\Omega$.  
\end{definizione}

It is possible to prove that the Hausdorff asymmetry index is bounded: indeed if we denote by
\begin{equation*}
    \overline{r}=d(s_\Omega,\partial\Omega),
\end{equation*}
it follows that $\overline{r}>0$ as the Steiner point $s(\Omega)\in \mathring{\Omega}$ (see \cite{HCG}), so $B_{\overline{r}}(s(\Omega))\subset \Omega$, and 

\begin{equation*}
    d_\mathcal{H}(\Omega,B_\Omega)\leq d_\mathcal{H}(B_{\overline{r}},B_\Omega)=\zeta_{n-1}(\Omega)-\overline{r}=\leq \zeta_{n-1}(\Omega),
\end{equation*}
that is 
\begin{equation}
    \label{3}
    \alpha_\mathcal{H}(\Omega)\leq 1.
\end{equation}

We now want to prove a "Hausorff counterpart" of the propagation of the Fraenkel asymmetry contained in \cite{brasco}, as we aim to adapt their techniques, inspired by an idea contained in \cite{hansen}, to our case. 

\begin{lemma}[Propagation of the Hausdorff asymmetry]
    \label{lembrasco}
    Let $\Omega\subset\mathbb{R}^n$ be a bounded, convex set with finite measure and let $U\subset\Omega$, $\abs{U}>0$ be such that 
    \begin{equation}\label{cond}
       d_\mathcal{H}(\Omega, U)\leq \dfrac{1}{2(n+2)}d_\mathcal{H}(\Omega, B_\Omega),
     \end{equation}
    where $B_\Omega$ is the Steiner ball of $\Omega$. 
    Then, we have
        \begin{equation}\label{claim_lem}
            d_\mathcal{H}(U, B_U)\geq \dfrac{1}{2}d_\mathcal{H}(\Omega, B_\Omega),
        \end{equation}
    where $B_U$ is the Steiner ball 
    of $U$.
\end{lemma}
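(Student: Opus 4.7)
The strategy is to apply the triangle inequality for the Hausdorff distance:
\begin{equation*}
  d_\mathcal{H}(\Omega, B_R) \leq d_\mathcal{H}(\Omega, U) + d_\mathcal{H}(U, B_r) + d_\mathcal{H}(B_r, B_R),
\end{equation*}
and then to control $d_\mathcal{H}(B_r, B_R)$ by a constant multiple of $d_\mathcal{H}(\Omega, U)$. If I can show $d_\mathcal{H}(B_r, B_R) \leq (n+1)\, d_\mathcal{H}(\Omega, U)$, the right-hand side becomes at most $(n+2)\, d_\mathcal{H}(\Omega, U) + d_\mathcal{H}(U, B_r)$; under the smallness hypothesis \eqref{cond} the first term is bounded by $\tfrac12 d_\mathcal{H}(\Omega, B_R)$, which absorbed into the left-hand side yields \eqref{claim_lem}. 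Notice that the constant $2(n+2)$ in \eqref{cond} is chosen precisely to make this absorption work.

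For the key estimate on $d_\mathcal{H}(B_r, B_R)$, I would begin from the elementary identity that for two balls in $\mathbb{R}^n$,
\begin{equation*}
    d_\mathcal{H}(B_{\rho_1}(p_1), B_{\rho_2}(p_2)) = |p_1 - p_2| + |\rho_1 - \rho_2|,
\end{equation*}
which is verified by directly unpacking the two inclusion conditions in the definition of $d_\mathcal{H}$. Applied to the Steiner balls of $\Omega$ and $U$, this splits the task into two pieces: controlling the displacement of the Steiner points $s(\Omega) \to s(U)$ and the variation of the Steiner radii $R=w(\Omega)/2 \to r=w(U)/2$, each by $d_\mathcal{H}(\Omega, U)$ up to a dimensional factor.

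For the centers, the Remark following Definition \ref{steinerball} already gives $|s(\Omega) - s(U)| \leq n\, d_\mathcal{H}(\Omega, U)$. For the radii, I would use the integral representation of the mean width together with the identity $d_\mathcal{H}(\Omega, U) = \|h(\Omega, \cdot) - h(U, \cdot)\|_{L^\infty(\mathbb{S}^{n-1})}$ recorded there:
\begin{equation*}
    |R - r| \leq \frac{1}{n\omega_n} \int_{\mathbb{S}^{n-1}} |h(\Omega, u) - h(U, u)|\, du \leq d_\mathcal{H}(\Omega, U).
\end{equation*}
Summing the two contributions produces $d_\mathcal{H}(B_r, B_R) \leq (n+1)\, d_\mathcal{H}(\Omega, U)$, and the rest of the argument is pure arithmetic. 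The only step requiring any care is verifying the ball-to-ball Hausdorff formula so that the constant $n+1$ comes out exactly; this is what makes the hypothesis constant $2(n+2)$ the right one for the method. There is no genuine obstacle beyond bookkeeping: the lemma is essentially a Lipschitz-type stability statement for the Steiner ball under Hausdorff perturbations.
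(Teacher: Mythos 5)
Your proposal is correct and follows essentially the same route as the paper: bound $d_\mathcal{H}(B_r,B_R)\le (n+1)\,d_\mathcal{H}(\Omega,U)$ by controlling the Steiner-point displacement and the radius variation separately, then absorb via the triangle inequality. The only cosmetic differences are that you derive $|R-r|\le d_\mathcal{H}(\Omega,U)$ from the mean-width integral whereas the paper gets the same bound via the Steiner formula for $W_{n-1}$, and you invoke the exact ball-to-ball Hausdorff identity whereas the paper only needs the one-sided inequality $d_\mathcal{H}(B_R,B_r)\le (R-r)+|s(\Omega)-s(U)|$; both give the identical constant.
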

\begin{proof}
    Let us set
    \begin{equation*}
        R=\zeta_{n-1}(\Omega),\quad r=\zeta_{n-1}(U),
    \end{equation*}
    and let us denote by $\varepsilon_1=d_\mathcal{H}(\Omega, U)$, $\varepsilon_2=d_\mathcal{H}(B_U,B_R)$, then,
     the monotonicity with respect to the inclusion of the mean radii and the Steiner formula for the quermassintegral \eqref{Steinerquermass} give
    \begin{equation*}
        \begin{split}
            R&=\zeta_{n-1}(\Omega)\leq \zeta_{n-1}(U+\varepsilon_1 B_1)=\frac{W_{n-1}(U+\varepsilon_1 B_1)}{\omega_n}=\frac{W_{n-1}(U)+\omega_n\varepsilon_1}{\omega_n}
            =\frac{W_{n-1}(U)}{\omega_n}+\varepsilon_1\\
            &=r+\varepsilon_1=r+d_\mathcal{H}(\Omega, U).
        \end{split}
    \end{equation*}
    Moreover, 
    \begin{equation}
        \label{distpalle}
        d_\mathcal{H}(B_R, B_U)\le R-r + \abs{s(\Omega)-s(U)}\le (n+1) d_\mathcal{H}(\Omega, U),
    \end{equation}
   
    where $s(\Omega)$ and $s(U)$ are the Steiner point of $\Omega$ and $U$ respectively.
     Now, using the reverse triangular inequality, the hypothesis \eqref{cond} and \eqref{distpalle}, we have
     \begin{equation*}
        \begin{split}
            d_\mathcal{H}(U,B_U)&\geq d_\mathcal{H}(\Omega,B_U)-d_\mathcal{H}(\Omega,U)\\
            &=d_\mathcal{H}(\Omega,B_\Omega)+d_\mathcal{H}(\Omega,B_U)-d_\mathcal{H}(\Omega,B_\Omega)-d_\mathcal{H}(\Omega,U)\\
            &\geq d_\mathcal{H}(\Omega,B_\Omega)-d_\mathcal{H}(B_U,B_\Omega)-d_\mathcal{H}(\Omega,U)\\
            &\geq d_\mathcal{H}(\Omega,B_\Omega)-(n+2)d_\mathcal{H}(\Omega,U)>\frac{1}{2}d_\mathcal{H}(\Omega,B_\Omega).
        \end{split}
     \end{equation*}
\end{proof}

\subsection{Symmetrization}
\label{symmetrization}
    Let $\Omega$ be an open, bounded and convex set in $\mathbb{R}^n$ and 
    \begin{equation*}
        \mathcal{A}(\Omega)=\{u\in C^\infty(\Omega)\cap C(\overline{\Omega})\,:\, \text{$u$ is strictly convex in $\Omega$ and vanishes on $\partial\Omega$}\}.
    \end{equation*}
    We observe that any $u$ in $\mathcal{A}(\Omega)$ has a unique minimum in $\Omega$. Denote
    \begin{equation*}
        \Omega(\mu)=\{x\in \Omega\,:\, u(x)<\mu\},\quad\partial\Omega(\mu)=\{x\in\Omega\,:\,u(x)=\mu\},
    \end{equation*}
    for $\mu\in[m,0]$, where $m$ is the minimum of $u$. We define the \textit{$k$-symmetrized} of $u$, $k=0,\dots,n-1$, to be
    \begin{equation*}
        u_k^*(x)=\sup\{\mu\in[m,0]\,:\, \zeta_k(\overline{\Omega(\mu)})<|x|\}
    \end{equation*}
    that is a radially symmetric function defined in the ball
    \begin{equation*}
        \Omega_k^*=\{x\,:\, |x|<\zeta_k(\overline{\Omega})\}.
    \end{equation*}

    It is possible to prove that the function $u^\ast_k$ is strictly increasing along the radii and it is strictly convex, so $u_k^* \in \mathcal{A}(\Omega^*_k)$. 

    Moreover, for $k=0$, we recover the standard Schwarz symmetrization.  
    \begin{oss}[Comparison of $u^*_k$] 
    \label{comparison}
        We observe that the Aleksandrov-Fenchel inequality \eqref{Aleksandrov_Fenchel_inequalities}  implies the inclusions 
        \begin{equation*}
            \Omega^*_0\subseteq\Omega^*_1\subseteq\dots\subseteq\Omega^*_k\subseteq\Omega^*_{k+1}\subseteq\dots\subseteq\Omega^*_{n-1}
        \end{equation*}
        and then $u^*_k$ is well defined on the smallest ball $\Omega^*_0$ for all $k=0,\dots,n-1$. If $x\in\Omega^*_0$, then we have
        \begin{equation*}
            \{\mu\,:\,\zeta_{n-1}(\overline{\Omega(\mu)})<|x|\}\subseteq\{\mu\,:\,\zeta_{n-2}(\overline{\Omega(\mu)})<|x|\}\subseteq\dots\subseteq\{\mu\,:\,\zeta_0(\overline{\Omega(\mu)})<|x|\},
        \end{equation*}
        that is, passing to the supremum,
        \begin{equation*}
            0\ge u^*_0(x)\geq u^*_1(x)\geq\dots\geq u^*_k(x)\geq u^*_{k+1}(x)\geq\dots\geq u^*_{n-1}(x)
        \end{equation*}
        for all $x\in\Omega^*_0$. In particular, if $0\leq i<j\leq n-1$, 
        \begin{equation*}
            u^*_i(x)\geq u^*_j(x)
        \end{equation*}
        for all $x\in\Omega^*_i$.
    \end{oss}

\begin{oss} \label{comparison:cones}
    The convexity of the function $u$ and the monotonicity of the quermassintegral under inclusion can be very useful in bounding from below the mean radii of the sublevel sets $\Omega(\mu)$. Indeed, the convexity of $u$ implies that it is possible to find a function $c$ whose graph is the cone of base $\Omega$ and height $m$ that satisfies $u\le c$. If we denote by $C(\mu)=\{c<\mu\}$, it is clear that $C(\mu)\subseteq \Omega(\mu)$.

    Without loss of generality, we can assume that $0\in \Omega$ and $u(0)=m$. By the definition of cone, the sublevel sets of the function $c$ are homothetic to $\Omega$ (see Figure \ref{Figure 1}), and we can explicitly write
   \begin{equation}
    \label{omotetia}
    C(\mu)=\left(1-\frac{\mu}{m}\right)\Omega,
    \end{equation}
 as we are assuming that $u(0)=m$, the center of the homothety is $0$.
 This allows us to have the following bound for the mean radii of the sublevel sets of the function $u$
    \begin{equation}
        \label{control}
        \zeta_k(\Omega(\mu))\geq\zeta_k(C(\mu))=\left(1-\frac{\mu}{m}\right)\zeta_k(\Omega)
    \end{equation}
    where $\mu\in[m,0]$.
\end{oss}

\begin{oss}
The inclusion  $C(\mu)\subset \Omega(\mu)$ is also useful to bound the Hausdorff distance between $\Omega(\mu)$ and $\Omega$
\begin{equation}
\label{1}
    d_\mathcal{H}(\Omega,\Omega(\mu))\leq d_\mathcal{H}(\Omega,C(\mu)),
\end{equation}
and, as $C(\mu)$ is homothetic to $\Omega$, one can bound this distance in terms of $\mu$ and the diameter of $\Omega$, as it holds

$$d_\mathcal{H}(\Omega,t\Omega)\le (1-t)D(\Omega), \quad \forall 0\le t\le 1$$
and so
\begin{equation}
    \label{2}
    d_\mathcal{H}(\Omega,C(\mu))\le \frac{\mu}{m}D(\Omega).
\end{equation}
This will be very useful in order to apply the Propagation Lemma \ref{lembrasco}.
\end{oss}

    \tikzset{every picture/.style={line width=0.75pt}} %set default line width to 0.75pt        
    \begin{figure}
    \centering
        \begin{tikzpicture}[x=0.75pt,y=0.75pt,yscale=-1,xscale=1]
        %uncomment if require: \path (0,300); %set diagram left start at 0, and has height of 300

        %Shape: Polygon Curved [id:ds7293982836326212] 
        \draw  [fill={rgb, 255:red, 155; green, 155; blue, 155 }  ,fill opacity=1 ] (81.5,46.5) .. controls (106.5,23.75) and (187.75,20.5) .. (207.75,46.5) .. controls (227.75,72.5) and (222.25,109.25) .. (197.75,119) .. controls (173.25,128.75) and (118.5,116.25) .. (81.5,106.5) .. controls (44.5,96.75) and (56.5,69.25) .. (81.5,46.5) -- cycle ;
        %Curve Lines [id:da23289912644756017] 
        \draw    (219.58,88.42) .. controls (218.84,103.47) and (216.32,123.58) .. (216.21,125.37) .. controls (216.11,127.16) and (205.25,199.5) .. (186.75,230) .. controls (168.25,260.5) and (146.83,261.26) .. (139.83,260.76) .. controls (132.83,260.26) and (103.75,252) .. (80,204.25) .. controls (56.25,156.5) and (57.5,107.75) .. (58.25,80.5) ;
        %Shape: Polygon Curved [id:ds0667154268351009] 
        \draw  [fill={rgb, 255:red, 155; green, 155; blue, 155 }  ,fill opacity=1 ][dash pattern={on 4.5pt off 4.5pt}] (86.15,147.35) .. controls (106.22,137.78) and (121.58,138) .. (142.67,138.5) .. controls (163.75,139) and (196.81,139.25) .. (205.75,149) .. controls (214.69,158.75) and (201.84,199.32) .. (180.25,206.5) .. controls (158.66,213.68) and (119.62,200.62) .. (89.25,193) .. controls (58.88,185.38) and (66.09,156.93) .. (86.15,147.35) -- cycle ;
        %Straight Lines [id:da6817371481897134] 
        \draw    (57.75,89) -- (140.25,259.5) ;
        %Straight Lines [id:da7758618837066869] 
        \draw    (215.89,101.26) -- (139.83,260.76) ;
        %Shape: Regular Polygon [id:dp3498134363878842] 
        \draw  [fill={rgb, 255:red, 191; green, 186; blue, 186 }  ,fill opacity=1 ][dash pattern={on 0.84pt off 2.51pt}] (113.22,157.25) .. controls (124.6,144.76) and (161.59,142.97) .. (170.7,157.25) .. controls (179.81,171.53) and (177.3,191.71) .. (166.15,197.06) .. controls (154.99,202.41) and (130.07,195.55) .. (113.22,190.2) .. controls (96.37,184.84) and (101.84,169.74) .. (113.22,157.25) -- cycle ;
        %Straight Lines [id:da20334810139735504] 
        \draw  [dash pattern={on 0.84pt off 2.51pt}]  (102.25,79.5) -- (102,178.75) ;
        %Straight Lines [id:da32236270336873707] 
        \draw  [dash pattern={on 0.84pt off 2.51pt}]  (176.25,81.5) -- (176,180.25) ;
        %Shape: Regular Polygon [id:dp8862741604146036] 
        \draw  [fill={rgb, 255:red, 191; green, 186; blue, 186 }  ,fill opacity=1 ][dash pattern={on 0.84pt off 2.51pt}] (113.22,57.25) .. controls (124.6,44.76) and (161.59,42.97) .. (170.7,57.25) .. controls (179.81,71.53) and (177.3,91.71) .. (166.15,97.06) .. controls (154.99,102.41) and (130.07,95.55) .. (113.22,90.2) .. controls (96.37,84.84) and (101.84,69.74) .. (113.22,57.25) -- cycle ;

        % Text Node
        \draw (75.5,170.4) node [anchor=north west][inner sep=0.75pt]  [font=\tiny]  {$\Omega ( \mu )$};
        % Text Node
        \draw (80.5,68.4) node [anchor=north west][inner sep=0.75pt]  [font=\tiny]  {$\Omega $};
        % Text Node
        \draw (133.5,263.4) node [anchor=north west][inner sep=0.75pt]  [font=\scriptsize]  {$m$};
        % Text Node
        \draw (131,68.65) node [anchor=north west][inner sep=0.75pt]  [font=\tiny]  {$C( \mu )$};

    \end{tikzpicture}
    \caption{Control of the quermassintegrals of the  sublevelsets of $u$ via the quermassintegrals of $\Omega$.}
    \label{Figure 1}
    \end{figure}
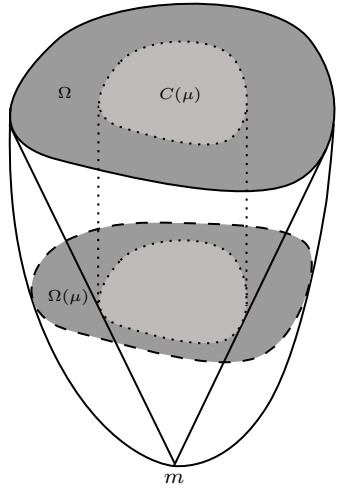
    \subsection{The \texorpdfstring{$k$}{k}-Hessian operator}
Let $\Omega$ be an open subset of $\R^n$ and let $u\in C^2(\Omega)$. As seen in \eqref{kess}, the $k$-Hessian operator is the $k$-th elementary function of the Hessian matrix $D^2u$. Except for the case $k=1$ where the $k$-Hessian operator reduces to the usual Laplace operator, these operators are fully nonlinear and non-elliptic, unless one restricts to the class  of $k$-convex function

$$\mathcal{A}_k(\Omega)= \left\{u\in C^2(\Omega) : S_i (D^2u)\ge 0 \, \text{ in } \, \Omega, \,i=1,\dots, k\right\}.$$

The operator $S_k^{\frac{1}{k}}$ is homogeneous of degree 1, and if we denote by

$$S_k^{ij}(D^2u)=\frac{\partial}{\partial u_{ij}}S_k(D^2u),$$
the Euler identity for homogeneous functions gives
\begin{equation*}
    S_k(D^2u)=\frac{1}{k} S_k^{ij}(D^2u)u_{ij}.
\end{equation*}
A direct computation shows that the $\left(S_k^{1j}(D^2u), \dots, S_k^{nj}(D^2u)\right)$ is divergence-free, hence $S_k(D^2u)$ can be written in divergence form

\begin{equation}
    \label{skdiv}
    S_k(D^2u)=\frac{1}{k}\left(S_k^{ij}(D^2u) u_j\right)_i,
\end{equation}
where the subscripts $i, j$ stand for partial differentiation. 

If $u\in C^2(\Omega)$ and $t$ is a regular point of $u$, on the boundary of $\{u\le t\}$ it is possible to link the $k$-Hessian operator with the $(k-1)$-th curvature

\begin{equation}
    \label{hksk}
   \binom{n-1}{k-1} \sigma_{k-1}=\frac{S_k^{ij}(D^2u)u_iu_j}{\abs{\nabla u}^{k+1}}.
\end{equation}
Finally, we report a classic result without proof, for more details see \cite{tso}, Proposition 6.
\begin{prop}
    \label{prop6}
    Let $\Sigma(t)$ be a smooth family of embedded hypersurfaces in $\mathbb{R}^n$ oriented with respect to the unit inner normal $-\nu$. Then for all $k=0,1,\dots,n-1$ we have
    \begin{equation*}
        \frac{d}{dt}\int_{\Sigma(t)}\sigma_k\, d\mathcal{H}^{n-1}=(n-k-1)\int_{\Sigma(t)}\sigma_{k+1}\,\nu\cdot\xi \, d\mathcal{H}^{n-1}
    \end{equation*}
    where 
    \begin{equation*}
        \xi=\frac{\partial\Sigma(t)}{\partial t}
    \end{equation*}
    is the variation vector field.
\end{prop}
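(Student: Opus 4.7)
My plan is to prove the formula by reducing to a pure normal variation, applying the standard evolution equations for the induced geometry of $\Sigma(t)$, and invoking the divergence-free property of the Newton transformations in the flat ambient $\mathbb{R}^n$.

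First, I would parametrize the family by a smooth map $F\colon M\times I\to\mathbb{R}^n$ with $F(\cdot,t)$ an embedding onto $\Sigma(t)$ and $\partial_t F=\xi$. On each $\Sigma(t)$ decompose $\xi=\xi^T+f\,\nu$ with $f=\nu\cdot\xi$. The tangential part $\xi^T$ merely generates a one-parameter family of self-diffeomorphisms of $M$ and does not change $\int_{\Sigma(t)}\sigma_k\,d\mathcal{H}^{n-1}$, so it is enough to compute the derivative for the pure normal variation $\xi=f\,\nu$.

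Next, I would invoke the classical evolution equations under a normal variation of speed $f$ (with $-\nu$ as inner normal, which fixes all the signs): $\partial_t g_{ij}=2f\,h_{ij}$, $\partial_t h_i{}^j=-\nabla_i\nabla^j f - f\,h_i{}^k h_k{}^j$, and $\partial_t(d\mathcal{H}^{n-1})=(n-1)\sigma_1\,f\,d\mathcal{H}^{n-1}$, where $h_i{}^j$ is the Weingarten operator and $(n-1)\sigma_1=\mathrm{tr}(h)$. Using $\binom{n-1}{k}\sigma_k=e_k(\kappa_1,\ldots,\kappa_{n-1})$ and the fact that the differential of $e_k$ with respect to the Weingarten matrix is the $(k-1)$-st Newton tensor $T_{k-1}$, the chain rule gives
\[
\binom{n-1}{k}\partial_t\sigma_k = -(T_{k-1})_i{}^j\nabla_j\nabla^i f - f\,(T_{k-1})_i{}^j h_j{}^k h_k{}^i.
\]

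In the final step I would substitute the above into $\frac{d}{dt}\int_{\Sigma(t)}\sigma_k\,d\mathcal{H}^{n-1}$ and integrate by parts. Since $\mathbb{R}^n$ is flat, the Codazzi equations force the Newton tensor to be divergence-free on $\Sigma(t)$, $\nabla_j(T_{k-1})_i{}^j=0$, and thus the Hessian-of-$f$ term integrates to zero by the divergence theorem on the closed hypersurface $\Sigma(t)$. What remains is a pointwise algebraic identity in the principal curvatures; the Newton trace formulas $\mathrm{tr}(T_{k-1})=(n-k)\binom{n-1}{k-1}\sigma_{k-1}$, $\mathrm{tr}(T_{k-1}W)=k\binom{n-1}{k}\sigma_k$, and $\mathrm{tr}(T_{k-1}W^2)=(n-1)\binom{n-1}{k}\sigma_1\sigma_k-(k+1)\binom{n-1}{k+1}\sigma_{k+1}$ show that the $(n-1)\sigma_1\sigma_k f$ contribution coming from $\partial_t(d\mathcal{H}^{n-1})$ cancels the corresponding term inside $\mathrm{tr}(T_{k-1}W^2)$, leaving $(k+1)\binom{n-1}{k+1}\binom{n-1}{k}^{-1}\sigma_{k+1}\,f = (n-k-1)\sigma_{k+1}\,f$ under the integral. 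The principal obstacle is not conceptual but notational: one has to track carefully the combinatorial factors $\binom{n-1}{j}$ and the signs imposed by the inner-normal convention so that the overall coefficient collapses exactly to $n-k-1$.
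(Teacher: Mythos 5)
The paper itself gives no proof of Proposition~\ref{prop6}; it reports it as a classical result and refers to Tso's Proposition~6, which in turn relies on Reilly-type variational formulas. Your argument is the standard differential-geometric derivation, and it is correct. The reduction to a pure normal variation is legitimate (tangential variations reparametrize and leave $\int_{\Sigma(t)}\sigma_k\,d\mathcal{H}^{n-1}$ unchanged), the evolution equations you quote are the right ones for the stated orientation (for a sphere of radius $R$ moved outward with speed $f=R'$, $\partial_t h_i{}^j=-R'/R^2\,\delta_i{}^j=-fh_i{}^kh_k{}^j$, consistent), the Codazzi equations in flat $\mathbb{R}^n$ do give $\nabla_j(T_{k-1})_i{}^j=0$ so the Hessian term drops after two integrations by parts, and the Newton identity $\operatorname{tr}(T_{k-1}W^2)=e_1e_k-(k+1)e_{k+1}$ combined with $\partial_t(d\mathcal{H}^{n-1})=e_1f\,d\mathcal{H}^{n-1}$ leaves $(k+1)e_{k+1}f$, which after dividing by $\binom{n-1}{k}$ gives precisely $(n-k-1)\sigma_{k+1}f$. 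The only thing worth flagging is that the closing combinatorial step should be written out, since the whole content of the proposition is the coefficient $n-k-1$ and it is easy to misplace a binomial: $(k+1)\binom{n-1}{k+1}\binom{n-1}{k}^{-1}=n-k-1$ is the identity you need, and it does hold. One could also note that, since $\Sigma(t)=\partial\Omega(\mu)$ for sublevel sets of a convex function, the relevant $\Sigma(t)$ are indeed closed so the divergence theorem applies with no boundary term, as you implicitly assume.
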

\section{Proofs of main results.}\label{sec3}
\subsection{Proof of Theorem \ref{theorem1.1}}
\label{section3.1}
First of all, we prove the P\'olya-Szeg\H o type quantitative inequality contained in  Theorem \ref{theorem1.1}. Here, the convexity of the setting is crucial, as we want to take advantage of the comparison with cones explained in Remark \ref{comparison:cones} and the Alexandrov-Fenchel inequalities for convex sets. 
\begin{proof}[proof of Theorem \ref{theorem1.1}]
    The chain of inequalities (\ref{1})-(\ref{2}) 
    $$ d_\mathcal{H}(\Omega(\mu),\Omega)\le d_\mathcal{H}(C(\mu),\Omega)\le\frac{\mu}{m}D(\Omega)$$
    gives the following inclusions
    \begin{equation}
    \label{threshold inclusions}
        \begin{split}
            \left\{\mu\in[m,0]\,:\, \mu\geq\frac{m}{2(n+2)}\frac{d_\mathcal{H}(\Omega, B_\Omega)}{D(\Omega)}\right\}&\subseteq\left\{\mu\in[m,0]\,:\, d_\mathcal{H}(\Omega,C(\mu))<\frac{1}{2(n+2)}d_\mathcal{H}(\Omega,B_\Omega)\right\}\\
            &\subseteq\left\{\mu\in[m,0]\,:\, d_\mathcal{H}(\Omega,\Omega(\mu))<\frac{1}{2(n+2)}d_\mathcal{H}(\Omega,B_\Omega)\right\},
        \end{split}
    \end{equation}
    where the smallest set is well defined from (\ref{3}).
    We define
    \begin{equation}
        \label{threshold}
        \overline{\mu}=\inf\left\{\mu\in[m,0]\,:\, \mu\geq\frac{m}{2(n+2)}\frac{d_\mathcal{H}(\Omega, B_\Omega)}{D(\Omega)}\right\}=\frac{m}{2(n+2)}\frac{d_\mathcal{H}(\Omega, B_\Omega)}{D(\Omega)},
    \end{equation}

    and we observe that for all $\mu\in [\overline{\mu}, 0]$, the sublevel set $\Omega(\mu)$ satisfies the hypothesis of  Lemma \ref{lembrasco}, hence
\begin{equation*}
    \alpha_\mathcal{H}(\Omega(\mu))\ge \frac{\alpha_\mathcal{H}(\Omega)}{2}.
\end{equation*}
    
    Proceeding as in \cite{tso}, we have
    \begin{equation*}
        \begin{aligned}
            (k+1)H_k(u,\Omega)&=\int_\Omega (-u)S_k(D^2u)\, dx= \frac{1}{k}\int_\Omega S_k^{ij}(D^2u)u_iu_j\,dx\\
            &=\frac{1}{k}\int_m^0 \int_{\partial\Omega(\mu)} \frac{S_k^{ij}(D^2u)u_iu_j}{\abs{\nabla u}}\,d\mathcal{H}^{n-1}\,d\mu,
        \end{aligned}
    \end{equation*}
    
    where we have applied the divergence theorem and the Coarea formula. We can now apply identity \eqref{hksk} on $\partial\Omega(\mu)$ and the Jensen inequality, obtaining 
     \begin{equation*}
        \begin{aligned}
            k(k+1)H_k(u,\Omega)&=\binom{n-1}{k-1}\int_m^0 \int_{\partial\Omega(\mu)}\sigma_{k-1}\abs{\nabla u}^k\,d\mathcal{H}^{n-1}\,d\mu \ge \binom{n-1}{k-1}\bigintsss_m^0  \dfrac{\left[\int_{\partial\Omega(\mu)}\sigma_{k-1}\,d\mathcal{H}^{n-1}\right]^{k+1}}{\left[\int_{\partial\Omega(\mu)}\frac{\sigma_{k-1}}{\abs{\nabla u}}\,d\mathcal{H}^{n-1}\right]^k}\,d\mu.
        \end{aligned}
    \end{equation*}
    
    As a consequence of proposition \ref{prop6}, it follows

    $$\int_{\partial\Omega(\mu)}\frac{\sigma_{k-1}}{\abs{\nabla u}}\,d\mathcal{H}^{n-1}=n\omega_n \zeta_{k-1}^{n-k}(\Omega(\mu))\frac{d\zeta_{k-1}(\Omega(\mu))}{d\mu},$$
    
   so, recalling the definition of mean radii \eqref{meanradii} and applying the quantitative isoperimetric inequality (\ref{quantitative_mean_radii}) with $i=k-1$ and $j=k$, we have
    \begin{equation*}
        \begin{split}
            &k(k+1)H_k(u;\Omega)\geq \binom{n-1}{k-1}n\omega_n\bigintsss_{m}^{0}\frac{1}{\left(\frac{d\zeta_{k-1}}{d\mu}\right)^k}\left[\frac{\zeta_k^{k+1}(\Omega(\mu))}{\zeta_{k-1}^k(\Omega(\mu))} \right]^{n-k}\,d\mu\\
            &\geq \binom{n-1}{k-1}n\omega_n\bigintsss_{m}^{0}\frac{\zeta_{k-1}^{n-k}(\Omega(\mu))}{\left(\frac{d\zeta_{k-1}}{d\mu}\right)^k}\left[1+\frac{(n+1)\beta_n}{n(n-1)\omega_n}\alpha_\mathcal{H}^\frac{n+3}{2}(\Omega(\mu))\right]^\frac{k+1}{n-k+1}\,d\mu\\
            &\geq \binom{n-1}{k-1}n\omega_n\bigintsss_{m}^{0}\frac{\zeta_{k-1}^{n-k}(\Omega(\mu))}{\left(\frac{d\zeta_{k-1}}{d\mu}\right)^k}\,d\mu\\
            &+\binom{n-1}{k-1}n\omega_n\bigintsss_{m}^{0}\frac{\zeta_{k-1}^{n-k}(\Omega(\mu))}{\left(\frac{d\zeta_{k-1}}{d\mu}\right)^k}\left[\frac{(k+1)(n+1)\beta_n}{2n(n-1)(n-k+1)\omega_n}\alpha_\mathcal{H}^\frac{n+3}{2}(\Omega(\mu))\right]\,d\mu.
        \end{split}
    \end{equation*}
where the last inequality  follows from the inequality
\begin{equation}
\label{5}
    (1+x)^\alpha\ge 1+\frac{\alpha}{2}x, \,\forall\, \alpha>0, \,\forall\, x\in [0,1].
\end{equation}

    Now let us study the two integrals 
separately. The first integral, as also observed in \cite{tso}, is precisely
    \begin{equation*}
        \binom{n-1}{k-1}n\omega_n\bigintsss_{m}^{0}\frac{\zeta_{k-1}^{n-k}(\Omega(\mu))}{\left(\frac{d\zeta_{k-1}}{d\mu}\right)^k}\,d\mu=(k+1)kH_k(u_{k-1}^*;\Omega_{k-1}^*).
    \end{equation*}
    
    We aim to bound from below the second integral in terms of the Hausdorff asymmetry of $\Omega$. 
   Let us consider the threshold $\overline{\mu}$ defined in \eqref{threshold}, for $\mu\in[\overline{\mu}, 0]$ we can apply the propagation Lemma \ref{lembrasco} and the bound (\ref{control}), 
    obtaining
    \begin{equation*}
        \begin{split}
            &\binom{n-1}{k-1}n\omega_n\bigintsss_{m}^{0}\frac{\zeta_{k-1}^{n-k}(\Omega(\mu))}{\left(\frac{d\zeta_{k-1}}{d\mu}\right)^k}\left[\frac{(k+1)(n+1)\beta_n}{n(n-1)(n-k+1)\omega_n}\alpha_\mathcal{H}^\frac{n+3}{2}(\Omega(\mu))\right]\,d\mu\\
            &\geq \kappa_1(n,k)\alpha_\mathcal{H}^\frac{n+3}{2}(\Omega)\bigintsss_{\overline{\mu}}^{0}\frac{\zeta_{k-1}^{n-k}(\Omega(\mu))}{\left(\frac{d\zeta_{k-1}}{d\mu}\right)^k}\,d\mu\geq \kappa_1(n,k)\alpha_\mathcal{H}^\frac{n+3}{2}(\Omega)\bigintsss_{\overline{\mu}}^{0}\left(1-\frac{\mu}{m}\right)^{n-k}\frac{\zeta_{k-1}^{n-k}(\Omega)}{\left(\frac{d\zeta_{k-1}}{d\mu}\right)^k}\,d\mu\\
            &\geq \kappa_1(n,k)\left(1-\frac{\overline{\mu}}{m}\right)^{n-k}\alpha_\mathcal{H}^\frac{n+3}{2}(\Omega)\zeta_{k-1}^{n-k}(\Omega)\bigintsss_{\overline{\mu}}^{0}\frac{1}{\left(\frac{d\zeta_{k-1}}{d\mu}\right)^k}\,d\mu,
        \end{split}
    \end{equation*}
    where the constant is
    \begin{equation*}
        \kappa_1(n,k)=\binom{n-1}{k-1}\left(\frac{1}{2}\right)^\frac{n+3}{2}\frac{(k+1)(n+1)\beta_n}{(n-1)(n-k+1)}.
    \end{equation*}
    To conclude, we perform the change of variable $r=\zeta_{k-1}(\Omega(\mu))$ and a Jensen inequality, and we have
    \begin{equation*}
        \begin{split}
            \kappa_1(n,k)\left(1-\frac{\overline{\mu}}{m}\right)^{n-k}&\alpha_\mathcal{H}^\frac{n+3}{2}(\Omega)\zeta_{k-1}^{n-k}(\Omega)\bigintsss_{\overline{\mu}}^{0}\frac{1}{\left(\frac{d\zeta_{k-1}}{d\mu}\right)^k}\,d\mu\\
            &=\kappa_1(n,k)\left(1-\frac{\overline{\mu}}{m}\right)^{n-k}\alpha_\mathcal{H}^\frac{n+3}{2}(\Omega)\zeta_{k-1}^{n-k}(\Omega)\int_{\zeta_{k-1}(\Omega(\overline{\mu}))}^{\zeta_{k-1}(\Omega)}\left(\left(u^\ast_{k-1}\right)'(r)\right)^{k+1}\, dr\\
            &\geq \kappa_1(n,k)\left(1-\frac{\overline{\mu}}{m}\right)^{n-k}\frac{(-\overline{\mu})^{k+1}}{\zeta_{k-1}^k(\Omega)}\alpha_\mathcal{H}^\frac{n+3}{2}(\Omega)\zeta_{k-1}^{n-k}(\Omega).
        \end{split}
    \end{equation*}
    By \eqref{threshold}, we can bound
    \begin{equation}
    \label{bound 1-mu/m}
        \left(1-\frac{\overline{\mu}}{m}\right)\ge \left(\frac{2n+3}{2(n+2)}\right),
    \end{equation}
    and we obtain
    \begin{equation*}
        H_k(u;\Omega)-H_k(u_{k-1}^*;\Omega_{k-1}^*)\geq \kappa_2(n,k)(-m)^{k+1}\alpha_\mathcal{H}^{\frac{n+3}{2}+k+1}(\Omega)\zeta_{k-1}^{n-2k}(\Omega) \left(\frac{\zeta_{n-1}(\Omega)}{D(\Omega)}\right)^{k+1},
    \end{equation*}
    where the new constant is
    \begin{equation*}
        \kappa_2(n,k)=\frac{\kappa_1(n,k)}{k(k+1)}\left(\frac{2n+3}{2(n+2)}\right)^{n-k}.
    \end{equation*}
    Finally from \eqref{zeta/diam}, the thesis follows.
\end{proof}  

\begin{oss}
    In \cite{Nunziask}, it was observed that the P\'olya-Szeg\H o type inequality
    
    $$H_k(\Omega,u )\ge H_k(\Omega_{k-1}^*, u^*_{k-1})$$
   holds true for every function $u$ with convex sublevel sets. 

   Our result Theorem \ref{theorem1.1} remains valid when the convexity hypothesis is relaxed to the $p-$convexity, for some $p\in (0,1]$, in the sense that we can consider any negative function $u$ such that the power
    \begin{align*}
        -(-u)^p         
    \end{align*}
   is convex.
    A $p$-convex function defined on a convex set has convex sublevel sets, so the P\'olya-Szeg\H o type inequality holds for it, and in this case, Theorem \ref{theorem1.1} reads as follows: let $u\in C^2(\Omega)$ be a negative $p$-convex function, with $p\in (0,1]$, defined on an open, bounded and convex set $\Omega$ which vanishes on $\partial\Omega$, then  there exists a positive constant $\Tilde{C}=\Tilde{C}(n,k,p)$ such that
\begin{equation}
    H_k(\Omega,u)-H_k(\Omega^*_{k-1}, u^*_{k-1} )\ge \Tilde{C}\alpha_\mathcal{H}^{\frac{n+3}{2}+\frac{k+1}{p}}(\Omega) \zeta_{k-1}^{n-2k}(\Omega)\norma{u}_\infty^{k+1},
\end{equation}

and 
\begin{equation}
    \label{Tilde(c)}\Tilde{C}(n,k,p)=\binom{n-1}{k-1}\left(\frac{\omega_{n-1}}{n\omega_n}\right)^\frac{k+1}{p}\frac{(n+1)\beta_n}{2^\frac{n+3}{2}k(n-1)(n-k+1)}\left(\frac{2n+3}{2(n+2)}     \right)^{n-k}.
\end{equation}
\end{oss}

\vspace{2mm}
    Theorem \ref{theorem1.1} leads to a quantitative inequality for the eigenvalue of the $k$-Hessian operator. The eigenvalue of the $k$-Hessian operator is the only real number $\lambda_k(\Omega)$ such that 
    \begin{equation}
   \label{autovalsk}
        \begin{cases}
            S_k(D^2u)=\lambda_k(\Omega)(-u)^{k}& \text{in } \Omega,\\
            u=0& \text{on } \partial\Omega,
        \end{cases}
    \end{equation}
    has a solution (see \cite{Wang}).
    We recall the following variational characterization of the eigenvalue
    \begin{equation*}
        \lambda_k(\Omega)=\inf\left\{\frac{(k+1)H_k(u;\Omega)}{\|u\|^{k+1}_{L^{k+1}(\Omega)}}\,:\, \text{$u\in C^2(\Omega)$, $u$ $k$-convex, $u|_{\partial\Omega}=0$} \right\}.
    \end{equation*}
    As a consequence of the result in \cite{tso} and Remark \ref{comparison} we have that the eigenvalue of the $k$-Hessian operator is minimum on the ball in the class of convex sets for which the eigenfunction has convex level sets, provided that the $(k-1)$-th quermassintegral is fixed. In particular, the following corollary holds if one assumes that there exists a $p$-convex eigenfunction. We recall that evidence of that are contained in \cite{Ma2008TheCO, salani_logcon}.
    \begin{corollario}
    \label{quant:eigen}
        Let $\Omega$ be an open, bounded and convex set of $\mathbb{R}^n$.  Assume that the solution $u$ to \eqref{autovalsk} is $p$-convex, for some $p\in (0,1]$, then 
        there exists a positive constant $C_4=C_4(n,k,p,\Omega)$ such that
        \begin{equation*}
            \lambda_k(\Omega)-\lambda_k(\Omega^*_{k-1})\geq C_4 \alpha^{\frac{n+3}{2}+\frac{k+1}{p}}_\mathcal{H}(\Omega),
        \end{equation*}
        where 
        \begin{equation*}
            C_4(n,k,p,\Omega)=\Tilde{C}(n,k,p)(k+1)\omega_n^{-1} {\zeta_{k-1}^{-2k}(\Omega)}
        \end{equation*}
        and $\Tilde{C}(n,k,p)$ is given in \eqref{Tilde(c)}.
        \begin{comment}
            $$c_4=\binom{n-1}{k-1}\frac{(k+1)(n+1)\beta_n\omega_{n-1}^{k+1}}{2^\frac{n+3}{2}kn^{k+1}(n-1)(n-k+1)\omega_n^{k+2}}\left(\frac{2n+3}{2(n+2)}     \right)^{n-k}.$$
        \end{comment} 
    \end{corollario}
    \begin{proof}
        Let $u$ be the eigenfunction in \eqref{autovalsk}.
        We remark that Cavalieri's principle and Alexandrov-Fenchel inequality \eqref{Aleksandrov_Fenchel_inequalities} imply
        \begin{equation*}
            \int_\Omega (-u)^{k+1}\,dx=(k+1)\int_{0}^{+\infty} t^k|\{-u>t\}|\,dt\leq (k+1)\int_{0}^{+\infty} t^k|\{-u^*_{k-1}>t\}| \,dt=\int_{\Omega^*_{k-1}}(-u^*_{k-1})^{k+1}\,dx.
        \end{equation*}
        Then, from the Remark \ref{comparison} and Theorem \ref{theorem1.1} it follows that
        \begin{equation*}
            \begin{split}
                \lambda_k(\Omega)-\lambda_k(\Omega^*_{k-1})&\geq\frac{(k+1)H_k(u;\Omega)}{\|u\|^{k+1}_{L^{k+1}(\Omega)}}-\frac{(k+1)H_k(u^*_{k-1};\Omega^*_{k-1})}{\|u^*_{k-1}{\|^{k+1}_{L^{k+1}(\Omega^*_{k-1})}}}\\[2ex]
                &\geq (k+1)\frac{H_k(u;\Omega)-H_k(u^*_{k-1};\Omega^*_{k-1})}{\|u^*_{k-1}\|^{k+1}_{L^{k+1}(\Omega^*_{k-1})}}\\[2ex]
                &\geq\frac{(k+1)\Tilde{C}(n,k,p)\zeta_{k-1}^{n-2k}(\Omega)\norma{u}^{k+1}_{L^\infty(\Omega)}}{\|u^*_{k-1}\|^{k+1}_{L^{k+1}(\Omega^*_{k-1})}}\alpha^{\frac{n+3}{2}+\frac{k+1}{p}}_\mathcal{H}(\Omega).  
            \end{split}
        \end{equation*}
Let us observe that the constant can be replaced with a constant independent of $u$, indeed
\begin{equation*}
\begin{aligned}
    \frac{(k+1)\Tilde{C}(n,k,p)\zeta_{k-1}^{n-2k}(\Omega)\norma{u}^{k+1}_{L^\infty(\Omega)}}{\|u^*_{k-1}\|^{k+1}_{L^{k+1}(\Omega^*_{k-1})}}
    \ge& (k+1)\Tilde{C}(n,k,p) \frac{\zeta_{k-1}^{n-2k}(\Omega)}{|\Omega_{k-1}^*|}\\[1.5ex]
    =& (k+1)\Tilde{C}(n,k,p)\omega_n^{-1}{\zeta_{k-1}^{-2k}(\Omega)},
    \end{aligned}
\end{equation*}
   where we have used the inequality
   $$\|u^*_{k-1}\|^{k+1}_{L^{k+1}(\Omega^*_{k-1})}\le \|u^*_{k-1}\|^{k+1}_{L^{\infty}(\Omega^*_{k-1})} |\Omega_{k-1}^\ast|,$$
   and the identity $|\Omega_{k-1}^\ast|=\omega_n \zeta_{k-1}^n(\Omega)$. %Finally from \eqref{zeta/diam} the thesis holds.
    \end{proof}
    \vspace{3mm}
    Let us define the $k$-Torsional rigidity as
    \begin{equation}
        \label{TorsDef}T(\Omega)=\sup\left\{\frac{\|u\|_{L^1(\Omega)}^{k+1}}{(k+1)H_k(u,\Omega)}\,:\, \text{$u\in C^2(\Omega)$, $u$ $k$-convex, $u|_{\partial\Omega}=0$ }  \right\},
    \end{equation}
    the supremum in \eqref{TorsDef} is achieved by the solution to
    \begin{equation}
    \label{torsDirich}
        \begin{cases}
            S_k(D^2u)= \binom{n}{k}& \text{ in $\Omega$, }\\
            u=0 & \text{ on $\partial \Omega$}.
        \end{cases} 
    \end{equation}
    
    Another consequence of the result in \cite{tso} and Remark \ref{comparison} is that the $k$-Torsional rigidity is maximum on the ball in the class of convex sets for which the solution to \eqref{torsDirich} has convex level sets, provided that the $(k-1)$-th quermassintegral is fixed. Indeed,

    \begin{equation*}
        T(\Omega)= \frac{\left(\int_\Omega -u\,dx \right)^{k+1}}{(k+1)H_k(u;\Omega)}\leq \frac{\left(\int_{\Omega^*_{k-1}} -u^*_{k-1}\,dx \right)^{k+1}}{(k+1)H_k(u^*_{k-1};\Omega^*_{k-1})}\leq T(\Omega^*_{k-1}).
    \end{equation*}
    In particular, the following corollary holds as a consequence of Theorem \ref{theorem1.1} and Remark \ref{comparison}. 
    \begin{corollario}\label{quant:tors}
         Let $\Omega$ be an open, bounded convex set of $\mathbb{R}^n$. If there exists a $p$-convex solution to \eqref{torsDirich}, for some $p\in (0,1]$, then there exists a positive constant $C_5=C_5(n,k,p,\Omega)$ such that
         \begin{equation*}
             \frac{1}{T(\Omega)}-\frac{1}{T(\Omega^*_{k-1})}\geq C_5\alpha^{\frac{n+3}{2}+\frac{k+1}{p}}_\mathcal{H}(\Omega),
         \end{equation*}
        where 
        \begin{equation*}
            C_5(n,k,\Omega)=\frac{k+1}{\omega_n^{k+1}}\Tilde{C}(n,k,p)\zeta_{k-1}^{-k(n+2)}(\Omega),
        \end{equation*}
        and $\Tilde{C}(n,k,p)$ is the constant in \eqref{Tilde(c)}.
    \end{corollario}
    \begin{proof}
        Proceeding as in Corollary \ref{quant:eigen} we have
        \begin{equation*}
            \begin{split}
                \frac{1}{T(\Omega)}-\frac{1}{T(\Omega^*_{k-1})}&\geq\frac{(k+1)H_k(u;\Omega)}{\|u\|^{k+1}_{L^1(\Omega)}}-\frac{(k+1)H_k(u^*_{k-1};\Omega^*_{k-1})}{\|u^*_{k-1}\|^{k+1}_{L^1(\Omega^*_{k-1})}}\\[1.5ex]
                &\geq (k+1)\frac{H_k(u;\Omega)-H_k(u^*_{k-1};\Omega^*_{k-1})}{\|u^*_{k-1}\|^{k+1}_{L^1(\Omega^*_{k-1})}}\\[1.5ex]
                &\geq\frac{(k+1)\Tilde{C}(n,k,p)\zeta_{k-1}^{n-2k}(\Omega)\norma{u}^{k+1}_{L^\infty(\Omega)}}{\|u^*_{k-1}\|^{k+1}_{L^1(\Omega^*_{k-1})}}\alpha^{\frac{n+3}{2}+\frac{k+1}{p}}_\mathcal{H}(\Omega).
            \end{split}
        \end{equation*}
        Finally, since
        \begin{equation*}
            \|u^*_{k-1}\|_{L^1(\Omega^*_{k-1})}^{k+1}\leq \|u^*_{k-1}\|_{L^\infty(\Omega^*_{k-1})}^{k+1}|\Omega^*_{k-1}|^{k+1}=\|u\|^{k+1}_{L^\infty(\Omega)}\omega_n^{k+1}\zeta_{k-1}^{n(k+1)}(\Omega),
        \end{equation*}
        we have
        \begin{equation*}
            \frac{1}{T(\Omega)}-\frac{1}{T(\Omega^*_{k-1})}\geq \frac{k+1}{\omega_n^{k+1}}\Tilde{C}(n,k,p)\zeta_{k-1}^{-k(n+2)}(\Omega)\alpha_\mathcal{H}^{\frac{n+3}{2}+\frac{k+1}{p}}(\Omega).
        \end{equation*}
    \end{proof}
        
\subsection{Proof of Theorem \ref{teo1.2}}
\label{section 3.2}
We now proceed with the proof of the second result.

\begin{proof}[proof of Theorem \ref{teo1.2}]
    Let $f^0(|x|)$ be the Schwartz rearrangement of the function $f$ given in problem (\ref{poisson:sk}). 
    Let $u$ be the solution to (\ref{poisson:sk}) in $\mathcal{A}(\Omega)\cap C^2(\overline{\Omega})$, $u_{k-1}^*$ be its $(k-1)$-symmetrized and let $u^0$ be the solution to (\ref{poisson:symm}).
    We recall the explicit integral representation of the radial solution $u^0$:
    \begin{equation}
        \label{integral:representation}
        u^0(x)=-\left(\frac{k}{\binom{n-1}{k-1}}\right)^\frac{1}{k}\int_{|x|}^{R}\left(r^{-n+k}\int_{0}^{r}f^0(s)s^{n-1}\,ds\right)^\frac{1}{k}\,dr,
    \end{equation}
    where $R$ is the radius of the ball in which $u^0$ is defined.

     We integrate both sides of the equation \eqref{poisson:sk} on the sublevel set $\Omega(\mu)$, and we proceed as in $\S\ref{section3.1}$, obtaining

     \begin{equation*}
         \begin{aligned}
            k \int_{\Omega(\mu)} f(x)\, dx=& k\int_{\Omega(\mu)} S_k(D^2u)\, dx =\binom{n-1}{k-1}\int_{\partial\Omega(\mu)} \sigma_{k-1}\abs{\nabla u}^{k}\, d\mathcal{H}^{n-1}\ge\\
            \ge & \frac{1}{\left(\frac{d\zeta_{k-1}}{d\mu}\right)^k}\left[\frac{\zeta_k^{k+1}(\Omega(\mu))}{\zeta_{k-1}^k(\Omega(\mu))} \right]^{n-k} \ge \binom{n-1}{k-1}n\omega_n\frac{\zeta_{k-1}^{n-k}(\Omega(\mu))}{\left(\frac{d\zeta_{k-1}}{d\mu}\right)^k}\left(1+\kappa_3(n,k)\alpha_\mathcal{H}^\frac{n+3}{2}(\Omega(\mu))\right),
         \end{aligned}
     \end{equation*}
     where in the last inequality we have applied
     the quantitative quermassintegral inequality  \eqref{quantitative_mean_radii} and the constant is
     
    \begin{equation*}
        \kappa_3(n,k)=\frac{(n+1)(k+1)\beta_n}{2n(n-1)(n-k+1)\omega_n}.
    \end{equation*}

     On the other hand, we can apply the Hardy-Littlewood inequality and  we have
    \begin{equation*}
        \begin{split}
            &k\int_{\Omega(\mu)}f\,dx
            \leq k\int_{0}^{|\Omega(\mu)|}f^0\left(\left(\frac{a}{\omega_n}\right)^\frac{1}{n}\right)\,da\leq\\ &k\int_{0}^{\omega_n\zeta_{k-1}^n(\Omega(\mu))}f^0\left(\left(\frac{a}{\omega_n}\right)^\frac{1}{n}\right)\,da=kn\omega_n\int_{0}^{\zeta_{k-1}(\mu)}f^0(r)r^{n-1}\,dr.
        \end{split}
    \end{equation*}
    If we rewrite the previous inequality in a more compact form, we have
    \begin{equation*}
        1+\kappa_3(n,k)\alpha_\mathcal{H}^\frac{n+3}{2}(\Omega(\mu))\leq \frac{k}{\binom{n-1}{k-1}}\left(\frac{d\zeta_{k-1}}{d\mu}\right)^k\zeta_{k-1}^{-n+k}(\Omega(\mu))\int_{0}^{\zeta_{k-1}(\mu)}f^0(r)r^{n-1}\,dr,
    \end{equation*}
    and if we erase both members to the power $1/k$,  we get
    \begin{equation*}
        1+\frac{\kappa_3(n,k)}{2k}\alpha_\mathcal{H}^\frac{n+3}{2}(\Omega(\mu))\leq \frac{d\zeta_{k-1}}{d\mu}\left(\frac{k}{\binom{n-1}{k-1}}\right)^\frac{1}{k}\left(\zeta_{k-1}^{-n+k}(\Omega(\mu))\int_{0}^{\zeta_{k-1}(\mu)}f^0(r)r^{n-1}\,dr \right)^\frac{1}{k},
    \end{equation*}
    that can be rewritten, thanks to (\ref{integral:representation}), as
    \begin{equation}\label{intermedii}
        1+\frac{\kappa_3(n,k)}{2k}\alpha_\mathcal{H}^\frac{n+3}{2}(\Omega(\mu))\leq \frac{d\zeta_{k-1}}{d\mu}\, (u^0)'(\zeta_{k-1}(\Omega(\mu))).
    \end{equation}
    Now, let us integrate \eqref{intermedii} between $\mu$ and $0$, with 
    \begin{equation}
        \label{set:u^*}
        \mu\in\{\nu\in[m,0]\,:\,\zeta_{k-1}(\Omega(\nu))<|x|\},
    \end{equation}
    we have
    \begin{equation*}
        \begin{split}
            \mu&\geq u^0(\zeta_{k-1}(\Omega(\mu)))+\int_{\mu}^{0}\kappa_4(n,k)\alpha_\mathcal{H}^\frac{n+3}{2}(\Omega(\nu))\,d\nu\\
            &\geq  u^0(\zeta_{k-1}(\Omega(\mu)))+\int_{u_{k-1}^*}^{0}\kappa_4(n,k)\alpha_\mathcal{H}^\frac{n+3}{2}(\Omega(\nu))\,d\nu,
        \end{split}
    \end{equation*}
    where the constant is
    \begin{equation*}
        \kappa_4(n,k)=\frac{\kappa_3(n,k)}{2k}.
    \end{equation*}
    If we pass to the supremum on the set (\ref{set:u^*}), by  the definition of $(k-1)$-symmetrized of $u$, we have
    %and recalling that $u^0$ and $\zeta_{k-1}$ are increasing functions, we find that
    \begin{equation}
    \label{parz:parz}
        \begin{split}
            u_{k-1}^*(x)&\geq u^0(\zeta_{k-1}(u_{k-1}^*(x)))+\int_{u_{k-1}^*}^{0}\kappa_4(n,k)\alpha_\mathcal{H}^\frac{n+3}{2}(\Omega(\nu))\,d\nu\\
            &\geq u^0(x)+\int_{u_{k-1}^*}^{0}\kappa_4(n,k)\alpha_\mathcal{H}^\frac{n+3}{2}(\Omega(\nu))\,d\nu,
        \end{split}
    \end{equation}
    here we used the inequality $u^0(\zeta_{k-1}(u_{k-1}^*(x)))\geq u^0(x)$ which holds bacause $u^0$ is increasing. %and then
   
    Once again, we want to bound the integral in the right-hand side of \eqref{parz:parz} in terms of the Hausdorff asymmetry of $\Omega$. 
As in the proof of Theorem \eqref{theorem1.1}, we consider the threshold (\ref{threshold}), so we can apply the propagation Lemma \ref{lembrasco}, obtaining
    \begin{equation}\label{quant:parz}
        \begin{split}
            u_{k-1}^*(x)-u^0(x)&\geq \int_{u_{k-1}^*}^{0}\kappa_4(n,k)\alpha_\mathcal{H}^\frac{n+3}{2}(\Omega(\nu))\,d\nu\\
            &\geq \int_{\max\{u_{k-1}^*(x),\overline{\mu}\}}^{0}\kappa_4(n,k)\alpha_\mathcal{H}^\frac{n+3}{2}(\Omega(\nu))\,d\nu\\
            &\geq \kappa_5(n,k)\alpha_\mathcal{H}^\frac{n+3}{2}(\Omega)\left(-\max\{\overline{\mu}, u_{k-1}^*(x)\}\right),
        \end{split}
    \end{equation}
    where 
    \begin{equation*}
        \kappa_5(n,k)=\kappa_4(n,k)2^{-\frac{n+3}{2}}.
    \end{equation*}
    From the last inequality, we find that
    \begin{equation}\label{parz}
        \|u_{k-1}^*-u^0\|_{L^\infty\left(\Omega_{k-1}^*\right)}\geq \kappa_5(n,k)\alpha_\mathcal{H}^\frac{n+3}{2}(\Omega)(-\overline{\mu}),
    \end{equation}
    that  can be rewritten thanks to (\ref{threshold}) as
    \begin{equation*}
        \|u_{k-1}^*-u^0\|_{L^\infty\left(\Omega_{k-1}^*\right)}\geq \kappa_5(n,k)\alpha_\mathcal{H}^\frac{n+5}{2}(\Omega)\left(-\frac{m}{2(n+2)}\frac{\zeta_{n-1}(\Omega)}{D(\Omega)} \right).
    \end{equation*}
    Finally, from \eqref{zeta/diam} the thesis follows.
\end{proof}
\begin{oss}
    In \cite{ABMP}[Theorem 1.2], the authors proved the analogous of our  Theorem \ref{teo1.2} in the case of the Laplacian, obtaining

    \begin{equation}\label{goal0}
    \dfrac{\norma{u^{*}_{0}-u^0}_\infty}{\abs{\Omega}^{\frac{2-n}{n}} \norma{f}_1}\geq \widetilde C \alpha^3_\mathcal{F}(\Omega),
        \end{equation}
    where $\alpha^3_\mathcal{F}(\Omega)$ is the \emph{Fraenkel asymmetry} of $\Omega$.
    In that case, once a threshold is defined and one obtains a partial result as in \eqref{parz}, it is more subtle to conclude and obtain \eqref{goal0}. In our case, the comparison with cones in Remark \ref{comparison:cones} allows us to conclude more easily.
\end{oss}

\subsection{Proof of Theorems \ref{gradienti} and \ref{teo1.3}}
Now, we want to take advantage of inequality \eqref{poitso} to prove the comparison on the $k$-Hessian integral $H_k$ contained in Theorem \ref{gradienti}.

\begin{proof}[Proof of Theorem \ref{gradienti}]
    The proof is very similar to the proof of the Hardy-Littlewood inequality, indeed it follows from Cavalieri's principle and Fubini's Theorem

    \begin{equation*}
        \begin{aligned}
            \int_{\Omega}(-u) S_k(D^2u)\, dx&=\int_{\Omega}(-u) f\, dx= \int_{\Omega}\left( \int_0^{f(x)}\, ds\right) \left( \int_0^{-u(x)}\, dt\right)\, dx\\
            &=\int_0^{+\infty} \int_0^{+\infty} \left(\int_\Omega \chi_{\{f>s\}}(x)\chi_{\{-u>t\}}(x)\, dx\right)\, ds\, dt\\
            &= \int_0^{+\infty} \int_0^{+\infty} \abs{\{f>s\}\cap \{-u>t\} }\, ds\, dt \\
            &\le \int_0^{+\infty} \int_0^{+\infty} \min\{\abs{\{f>s\}}, \abs{\{-u>t\} }\}\, ds\, dt.
        \end{aligned}
    \end{equation*}
    Let us observe that the Alexandrov-Fenchel inequalities \eqref{Aleksandrov_Fenchel_inequalities} imply
    $$\abs{-u>t}\le \omega_n \left(\frac{W_{k-1}(-u>t)}{\omega_n}\right)^{\frac{n}{n-k+1}}=\omega_n \left(\frac{W_{k-1}(-u^*_{k-1}>t)}{\omega_n}\right)^{\frac{n}{n-k+1}}=\abs{-u^*_{k-1}>t},$$
    where the last equality follows from the fact that $\{-u^*_{k-1}>t\}$ is a ball, while from the definition of the Schwarz symmetrization, we have

    $$\abs{\{f>s\}}=\abs{\{f^0>s\}},$$
    moreover, the sets $\{-u^*_{k-1}>t\}$ and $\{f^0>s\}$ are concentric balls, and
    this implies

     \begin{equation*}
        \begin{aligned}
            \int_{\Omega}(-u) S_k(D^2u)\, dx &\le \int_0^{+\infty} \int_0^{+\infty} \min\{\abs{\{f>s\}}, \abs{\{-u>t\} }\}\, ds\, dt\\
            &\le \int_0^{+\infty} \int_0^{+\infty} \min\{|\{f^0>s\}|, \abs{\{-u^*_{k-1}>t\} }\}\, ds\, dt\\
            &=\int_{\Omega_{k-1}^*} (-u^*_{k-1})f^0\, dx \le\int_{\Omega_{k-1}^*} (-u^0)f^0\, dx\\
            &=\int_{\Omega_{k-1}^*} (-u^0)S_k(D^2u^0)\, dx.
        \end{aligned}
    \end{equation*}
\end{proof}

We are now ready for the proof of Theorem \ref{teo1.3}. Here, we make use of what we proved in Theorems \ref{teo1.2} and \ref{gradienti}.

\begin{proof}[proof of Theorem \ref{teo1.3}]
    From the proof of Theorem \ref{gradienti} it follows that

    $$\int_{\Omega}(-u) S_k(D^2u)\, dx \le \int_{\Omega_{k-1}^*} (-u^*_{k-1})f^0\, dx,$$
    that combined with \eqref{quant:parz} gives

    \begin{equation*}
    \begin{aligned}
        H_k(u^0,\Omega_{k-1}^*)- H_k(u,\Omega)&\ge H_k(u^0,\Omega_{k-1}^*)-\int_{\Omega_{k-1}^*} (-u^*_{k-1})f^0\, dx=\int_{\Omega_{k-1}^*} (u^*_{k-1}-u^0)f^0\, dx\\&\ge \kappa_5(n,k) \alpha_{\mathcal{H}}^\frac{n+3}{2}(\Omega)\int_{\Omega^*_{k-1}}f^0 (-\max\{\overline{\mu}, u_{k-1}^*\})\, dx.
        \end{aligned}
    \end{equation*}
    As the final step, we need to bound the right-hand side, and this can be done by considering that we are integrating a positive function

\begin{equation*}
    \begin{aligned}
        H_k(u^0,\Omega_{k-1}^*)- H_k(u,\Omega)&\ge  \kappa_5(n,k) \alpha_{\mathcal{H}}^\frac{n+3}{2}(\Omega)\int_{u^*_{k-1}<\overline{\mu}}f^0 (-\overline{\mu})\, dx\\
        &= \kappa_5(n,k) \alpha_{\mathcal{H}}^\frac{n+3}{2}(\Omega) (-\overline{\mu})\int_{0}^{\abs{u^*_{k-1}<\overline{\mu}}}  f^0\left(\left(\frac{a}{\omega_n}\right)^{\frac{1}{n}}\right)\, da. 
        %&\ge  \kappa_5(n,k) \alpha_{\mathcal{H}}^\frac{n+3}{2}(\Omega) \left(-\overline{\mu}\right) \min f \left(1-\frac{\overline{\mu} }{m}\right)^n\abs{\Omega}.
        \end{aligned}
    \end{equation*}
    where the last equality holds as both $u_{k-1}^*$ and $f^0$ are radially symmetric functions.
Let us recall that the function
\[
\varphi:s\rightarrow \dfrac{1}{s}\int_0^sf^0 
 \]
 is decreasing as $f^0$ is decreasing, so we can bound
$$\int_{0}^{\abs{u^*_{k-1}<\overline{\mu}}}  f^0\left(\frac{a}{\omega_n}\right)^{\frac{1}{n}}\, da \ge \frac{\abs{u^*_{k-1}<\overline{\mu}}}{\omega_n\zeta_{k-1}^n(\Omega)} \int_{0}^{\omega_n \zeta_{k-1}^n(\Omega)}  f^0\left(\frac{a}{\omega_n}\right)^{\frac{1}{n}}\, da. $$
 
   Moreover, the measure  of the sublevel set can be bounded, thanks to \eqref{control} and\eqref{bound 1-mu/m}
    \begin{equation*}
        \begin{split}
            |u^*_{k-1}-\overline{\mu}|&=W_0(\Omega^*_{k-1}(\overline{\mu}))=\omega_n\zeta_0^n(\Omega^*_{k-1}(\overline{\mu}))=\omega_n\zeta_{k-1}^n(\Omega^*_{k-1}(\overline{\mu}))\geq \omega_n \left(1-\frac{\overline{\mu}}{m} \right)^n\zeta_{k-1}^n(\Omega)\\
            &\geq \omega_n \left(\frac{2n+3}{2(n+2)} \right)^n\zeta_{k-1}^n(\Omega),
        \end{split}
    \end{equation*}
    and then
    \begin{equation*}
        H_k(u^0,\Omega_{k-1}^*)- H_k(u,\Omega)\geq \kappa_5(n,k) \alpha_{\mathcal{H}}^\frac{n+3}{2}(\Omega) (-\overline{\mu})  \left(\frac{2n+3}{2(n+2)} \right)^n\int_{0}^{\omega_n \zeta_{k-1}^n(\Omega)}  f^0\left(\left(\frac{a}{\omega_n}\right)^{\frac{1}{n}}\right)\,da
    \end{equation*}
    As in the proof of Theorems \ref{theorem1.1} and \ref{teo1.2}, we can bound the threshold $\overline{\mu}$ thanks to \eqref{threshold} and \eqref{zeta/diam}, so to obtain
    \begin{equation*}
        -\overline{\mu}\geq \frac{\|u\|_{L^\infty(\Omega)}}{2n(n+2)}\frac{\omega_{n-1}}{\omega_n}\alpha_\mathcal{H}(\Omega),
    \end{equation*}
     and observing that
     $$\int_{0}^{\omega_n \zeta_{k-1}^n(\Omega)}  f^0\left(\left(\frac{a}{\omega_n}\right)^{\frac{1}{n}}\right)\,da\ge \norma{f}_{L^1(\Omega)},$$
     then the proof is complete.
\end{proof}
\begin{oss}
Let us observe that also Theorem \ref{teo1.2} and Theorem \ref{teo1.3} hold more in general if we assume that the solution to  (\ref{poisson:sk}) is $p$-convex, for some $p\in (0,1]$.

Theorem \ref{teo1.2} reads as follows: there exists a positive constant $\Tilde{C}_2=\Tilde{C}_2(n,k,p)$ such that
    \begin{equation*}
        \frac{\|u_{k-1}^*-u^0\|_{L^\infty(\Omega_{k-1}^*)}}{\|u\|_{L^\infty(\Omega)}}\geq \Tilde{C}_2\alpha_\mathcal{H}^\frac{p(n+3)+2}{2p}(\Omega),
    \end{equation*}
    where
    \begin{equation*}
        \Tilde{C}_2(n,k)=\frac{(n+1)(k+1)\beta_n}{2^\frac{n+7}{2}nk(n-1)(n-k+1)\omega_n}\left(\frac{\omega_{n-1}}{2(n+2)n\omega_n)}\right)^\frac{1}{p}.
    \end{equation*}

On the other hand, Theorem \ref{teo1.3} can be generalized as follows: there exists a positive constant $\Tilde{C}_3=\Tilde{C}_3(n,k,p)$ such that
    \begin{equation*}
        \frac{H_k(u^0; \Omega^*_{k-1})-H_k(u;\Omega)}{\norma{u}_{L^\infty(\Omega) }\norma{f}_{L^1(\Omega)}}\ge \Tilde{C}_3\alpha_\mathcal{H}^{\frac{n+3}{2}+\frac{1}{p}}(\Omega),
    \end{equation*}
    where
    \begin{equation*}
        \Tilde{C}_3(n,k)=\frac{(n+1)(k+1)\beta_n}{2^\frac{n+7}{2}nk(n-1)(n-k+1)\omega_n}\left(\frac{\omega_{n-1}}{2(n+2)n\omega_n)}\right)^\frac{1}{p}.
    \end{equation*}
\end{oss}

 \subsection*{Acknowledgements}
The authors were partially supported by Gruppo Nazionale per l’Analisi Matematica, la Probabilità e le loro Applicazioni
(GNAMPA) of Istituto Nazionale di Alta Matematica (INdAM).   
Alba Lia Masiello was partially supported by  PRIN 2022, 20229M52AS: "Partial differential equations and related geometric-functional
inequalities," CUP:E53D23005540006.

\bibliographystyle{plain}
\bibliography{biblio}

\begin{thebibliography}{10}

\bibitem{alexandrov}
A.~D. Aleksandrov.
\newblock Mean values of the support function.
\newblock {\em Dokl. Akad. Nauk SSSR}, 172:755--758, 1967.

\bibitem{amato2024quantitativecomparisonresultsfirstorder}
V.~Amato and L.~Barbato.
\newblock Quantitative comparison results for first-order {H}amilton-{J}acobi
  equations.
\newblock {\em arXiv}, arXiv:2407.19504, 2024.

\bibitem{ABMP}
V.~Amato, R.~Barbato, A.~L. Masiello, and G.~Paoli.
\newblock The {T}alenti comparison result in a quantitative form.
\newblock {\em Ann. Scuola Norm. Sup. Pisa Cl. Sci.}, 31, 2024.

\bibitem{BNT_newisoperimetric}
B.~Brandolini, C.~Nitsch, and C.~Trombetti.
\newblock New isoperimetric estimates for solutions to {M}onge-{A}mp\`ere
  equations.
\newblock {\em Ann. Inst. H. Poincar\'e{} C Anal. Non Lin\'eaire},
  26(4):1265--1275, 2009.

\bibitem{Brandolini2007ComparisonRF}
B.~Brandolini and C.~Trombetti.
\newblock Comparison results for hessian equations via symmetrization.
\newblock {\em Journal of the European Mathematical Society}, 9:561--575, 2007.

\bibitem{brasco}
L.~Brasco and G.~De~Philippis.
\newblock Spectral inequalities in quantitative form.
\newblock In {\em Shape {O}ptimization {A}nd {S}pectral {T}heory}, pages
  201--281. De Gruyter Open, Warsaw, 2017.

\bibitem{DPG_stab_eig}
F.~Della~Pietra and N.~Gavitone.
\newblock Stability results for some fully nonlinear eigenvalue estimates.
\newblock {\em Commun. Contemp. Math.}, 16(5):1350039, 23, 2014.

\bibitem{DPG_skautovalori}
F.~Della~Pietra and N.~Gavitone.
\newblock Upper bounds for the eigenvalues of {H}essian equations.
\newblock {\em Ann. Mat. Pura Appl. (4)}, 193(3):923--938, 2014.

\bibitem{DPG_anisotr_per}
F.~Della~Pietra and N.~Gavitone.
\newblock Symmetrization with respect to the anisotropic perimeter and
  applications.
\newblock {\em Math. Ann.}, 363(3-4):953--971, 2015.

\bibitem{DPGX_anisotropo}
F.~Della~Pietra, N.~Gavitone, and C.~Xia.
\newblock Symmetrization with respect to mixed volumes.
\newblock {\em Adv. Math.}, 388:Paper No. 107887, 31, 2021.

\bibitem{Nunziask}
N.~Gavitone.
\newblock Isoperimetric estimates for eigenfunctions of {H}essian operators.
\newblock {\em Ric. Mat.}, 58(2):163--183, 2009.

\bibitem{Gavitone2010WeightedEP}
N.~Gavitone.
\newblock Weighted eigenvalue problems for hessian equations.
\newblock {\em Nonlinear Analysis-theory Methods \& Applications},
  73:3651--3661, 2010.

\bibitem{GS_stability}
D.~Ghilli and P.~Salani.
\newblock Stability of isoperimetric type inequalities for some
  {M}onge-{A}mp\`ere functionals.
\newblock {\em Ann. Mat. Pura Appl. (4)}, 193(3):643--661, 2014.

\bibitem{Groemer_schneider91}
H.~Groemer and R.~Schneider.
\newblock Stability estimates for some geometric inequalities.
\newblock {\em Bull. London Math. Soc.}, 23(1):67--74, 1991.

\bibitem{HCG}
P.~M. Gruber and J.~M. Wills, editors.
\newblock {\em Handbook of convex geometry. {V}ol. {A}, {B}}.
\newblock North-Holland Publishing Co., Amsterdam, 1993.

\bibitem{hansen}
W.~Hansen and N.~Nadirashvili.
\newblock Isoperimetric inequalities in potential theory [{MR}1266215
  (95c:31003)].
\newblock In {\em I{CPT} '91 ({A}mersfoort, 1991)}, pages 1--14. Kluwer Acad.
  Publ., Dordrecht, 1994.

\bibitem{Ma2008TheCO}
X.~Ma and L.~Xu.
\newblock The convexity of solution of a class hessian equation in bounded
  convex domain in r-3.
\newblock {\em Journal of Functional Analysis}, 255:1713--1723, 2008.

\bibitem{salani_logcon}
P.~Salani.
\newblock Convexity of solutions and {B}runn-{M}inkowski inequalities for
  {H}essian equations in {$\Bbb R^3$}.
\newblock {\em Adv. Math.}, 229(3):1924--1948, 2012.

\bibitem{Schneider_2013}
R.~Schneider.
\newblock {\em Convex Bodies: The Brunn--Minkowski Theory}.
\newblock Encyclopedia of Mathematics and its Applications. Cambridge
  University Press, 2 edition, 2013.

\bibitem{talenti76}
G.~Talenti.
\newblock Elliptic equations and rearrangements.
\newblock {\em Ann. Scuola Norm. Sup. Pisa Cl. Sci. (4)}, 3(4):697--718, 1976.

\bibitem{talenti_monge_ampere}
G.~Talenti.
\newblock Some estimates of solutions to {M}onge-{A}mp\`ere type equations in
  dimension two.
\newblock {\em Ann. Scuola Norm. Sup. Pisa Cl. Sci. (4)}, 8(2):183--230, 1981.

\bibitem{tso}
K.~Tso.
\newblock On symmetrization and {H}essian equations.
\newblock {\em J. Analyse Math.}, 52:94--106, 1989.

\bibitem{Wang}
X.~J. Wang.
\newblock A class of fully nonlinear elliptic equations and related
  functionals.
\newblock {\em Indiana Univ. Math. J.}, 43(1):25--54, 1994.

\end{thebibliography}

\Addresses

\end{document}